\newtheorem{theorem}{Theorem}[section]
\newtheorem{corollary}[theorem]{Corollary}
\newtheorem{lemma}[theorem]{Lemma}
\newtheorem{definition}[theorem]{Definition}
\newtheorem{conjecture}[theorem]{Conjecture}
\newtheorem{case}{Case}
\newtheorem{case2}{Case}
\def\inst#1{$^{#1}$}
\title{Classes of graphs with $e$-positive chromatic symmetric function}
\author{Ang\`ele M. Foley\inst{1}\footnote{Formerly Ang\`ele M. Hamel.} \and Ch\'inh T. Ho\`ang\inst{1} \and Owen D. Merkel \inst{1} }
\begin{document}
\maketitle
\begin{center}
{\footnotesize
\inst{1} Department of Physics and Computer Science, Wilfrid Laurier
University, \\Waterloo, Ontario, Canada }
\end{center}
\begin{abstract}
In the mid-1990s, Stanley and Stembridge conjectured that the chromatic symmetric functions of claw-free co-comparability (also called incomparability) graphs were $e$-positive. The quest for the proof of this conjecture has led to an examination of other, related graph classes. In 2013 Guay-Paquet proved that if unit interval graphs are $e$-positive, that implies claw-free incomparability graphs are as well. Inspired by this approach, we consider a related case and prove  that unit interval graphs whose complement is also a unit interval graph are $e$-positive.   We introduce the concept of strongly $e$-positive to denote a graph whose induced subgraphs are all $e$-positive, and conjecture that a graph is strongly $e$-positive if and only if it is (claw, net)-free.  
\end{abstract}

\section{Introduction}
\label{introduction}

A 1995 paper of Stanley \cite{Sold} introduced the chromatic symmetric functions and  proved a host of properties about them. A key element of this foundational paper was a  conjecture due to Stanley and Stembridge (originally stated in other terms in \cite{stanley}) that the chromatic symmetric functions of claw-free co-comparability (also called incomparability) graphs had the property known as $e$-positivity (defined in Section \ref{Background}).  As of this writing, this conjecture remains unproved, and work on it and on related results has fueled research in the area for over 20 years. 
A fundamental contribution to this endeavour was Guay-Paquet's result that if Stanley and Stembridge's conjecture holds for unit interval graphs, then it holds for claw-free co-comparability graphs \cite{GP}.  This result has put a spotlight on unit interval graphs. In related work, Shareshian and Wachs \cite{Shareshian} conjectured that co-comparability graphs of natural unit interval orders are $e$-positive, and Cho and Huh \cite{ChoHuh} and Harada and Precup \cite{HP} have proved $e$-positivity for several subclasses of unit interval graphs.  The time is ripe for further investigations of subclasses and superclasses of unit interval graphs.

Graphs and their complements are natural pairs to study.  The (claw, co-claw)-free graphs hold particular interest.  Two of the authors investigated them in \cite{hamel}, concluding they were not  all $e$-positive.  Here we revisit this result, showing that the particular graph called the net is the only exception.  
This result follows by careful consideration of the graph structure, and subsequent decomposition into constituent graphs.  From this analysis, along with a number of powerful graph theory results, we derive a series of results, culminating in a theorem that states that if a graph $G$ and its complement are both unit interval graphs, then $G$ is $e$-positive.

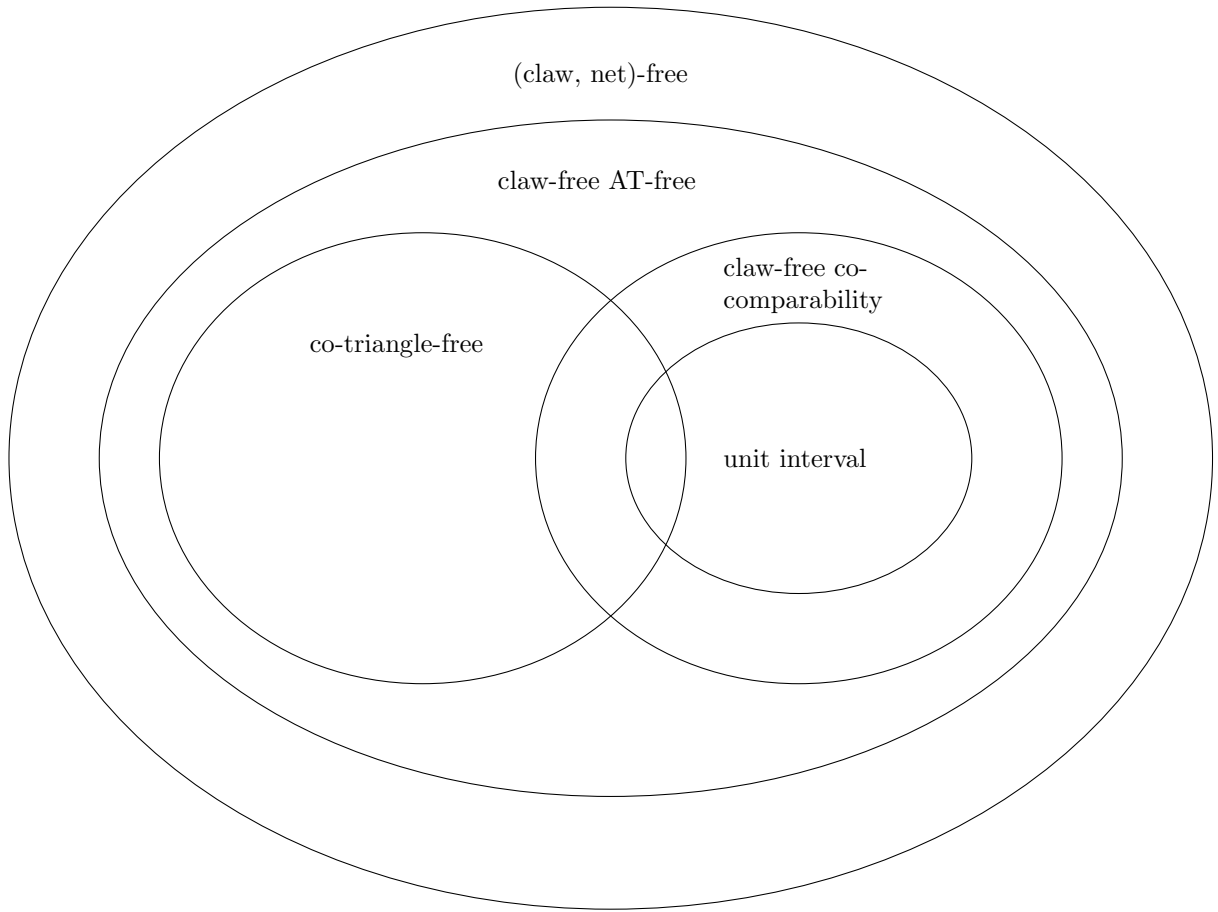
\begin{figure}
\begin{center}
\begin{tikzpicture} [scale = 1]
\tikzstyle{every node}=[font=\small]
\newcommand{\size}{1}
\newcommand{\Universe{1}}{

	\draw (0, 0) ellipse (8cm and 6cm);
	\draw (0, 0) ellipse (6.8cm and 4.5cm);
	\draw (-2.5, 0) ellipse (3.5cm and 3.0cm);
	\draw (2.5, 0) ellipse (3.5cm and 3.0cm);
	\draw (2.5, 0) ellipse (2.3cm and 1.8cm);

    	\node[text width=5cm] at (1.2,5.1) {(claw, net)-free};
   	\node[text width=5cm] at (1,3.7) {claw-free AT-free};
	\node[text width=3cm] at (3,2.3) {claw-free co-comparability};
	\node[text width=5cm] at (-1.5,1.5) {co-triangle-free};
	\node[text width=5cm] at (4,0) {unit interval};

}

\end{tikzpicture}
\end{center}
\caption{Classes of (claw, net)-free graphs. If the graphs are connected, the co-triangle class and the claw-free, co-comparability class actually partition the claw-free, AT-free class.}\label{fig:universe}
\end{figure}

The graph class universe we are working in is captured by Figure \ref{fig:universe}. The class of claw-free co-comparability graphs targeted by Stanley and Stembridge  wholly contains the subclass of unit interval graphs.  
If we look at the larger picture we see that the superclass of claw-free, AT-free graphs  (see definition of AT-free in Section \ref{Background}) consists of co-triangle-free graphs (known to be $e$-positive \cite{StanleyBook}, restated in Theorem \ref{cotriangleetal}) and claw-free co-comparability graphs. Thus proving the Stanley and Stembridge conjecture would prove all claw-free, AT-free graphs were $e$-positive. 

Even farther beyond this is the class of (claw, net)-free graphs. The net (see Figure \ref{fig:net})  is significant as this is the example originally given by Stanley \cite{Sold} of a claw-free, non-$e$-positive, graph to show claw-free alone is not a property sufficient to guarantee $e$-positivity.   We focus particularly on (claw, net)-free graphs (note that for $n=4$ there is one non-$e$-positive graph (namely, the claw, $K_{1,3}$), for $n=5$ there are 4 non-$e$-positive connected graphs (namely $K_{1,4}$, dart, cricket $= K_{1,4}+e$, co-$\{K_3\cup 2K_1\}$), for $n=6$, there are 44 non-$e$-positive connected graphs, and for $n=7$ there are 374 non-$e$-positive connected graphs).
To our knowledge, this paper is the first exploration of the (claw, net)-free $e$-positivity question.  We conjecture these graphs are $e$-positive. We have verified our conjecture for graphs up to $9$ vertices. We also introduce the term {\em strongly e-positive} to denote graphs whose induced subgraphs are also $e$-positive, and we conjecture a graph is strongly $e$-positive if and only if it is (claw, net)-free.

The paper is structured as follows. Section \ref{Background} covers background and notation from both graph theory and symmetric function theory. It also summarizes much of what is already known about which graphs are $e$-positive.  Section \ref{unitintervalgraphs} proves our result on the $e$-positivity of unit interval graphs whose complement is also a unit interval graph. Along the way we consider the $e$-positivity question for (claw, co-claw)-free graphs. Section \ref{strongly} contains our conjectures about strongly $e$-positive graphs and about (claw, net)-free graphs.

\section{Background and Notation}
\label{Background}

We begin by defining both graph theory and symmetric function terms and notation.
Let $G$ = ($V$,$E$) be a finite, simple, undirected graph with vertex set $V$ and edge set $E$.  We assume all graphs are connected, an assumption necessary because of Lemma \ref{lem:chromatic_disjoint_union} . For vertices $u,v \in V$, define $d(u,v)$ to be the length of the shortest path between $u$ and $v$. For a vertex $v \in V$, the open neighbourhood of $v$ is defined by $N(v) = \{u \in V : uv \in E\}$. For $U \subseteq V$, let $[U]$ denote the induced subgraph of $G$ induced by $U$. For a set $H$ of graphs, define $H$-free to be the class of graphs that do not contain any graph in $H$ as an induced subgraph.

\begin{figure}
\begin{center}
\begin{tikzpicture}[scale=1.25]
\tikzstyle{every node}=[font=\small]
\newcommand{\size}{1}
\newcommand{\net}{1}{
    \path (\size * 4, 0) coordinate (g1);
    \path (g1) +(-\size, 0) node (g1_1){};
    \path (g1) +(0, 0) node (g1_2){};
    \path (g1) +(\size, 0) node (g1_3){};
    \path (g1) +(2 * \size, 0) node (g1_4){};
    \path (g1) +(\size/2, 0.75^0.5) node (g1_5){};
    \path (g1) +(\size/2, 0.75^0.5+\size) node (g1_6){};
    \foreach \Point in {(g1_1), (g1_2), (g1_3), (g1_4), (g1_5), (g1_6)}{
   \node at \Point{\textbullet};
    }
    \draw (g1_1) -- (g1_2)
              (g1_2) -- (g1_3)
              (g1_3) -- (g1_4)
              (g1_5) -- (g1_2)
              (g1_5) -- (g1_3)
              (g1_5) -- (g1_6);
}
	\end{tikzpicture}
\end{center}
\caption{The net}\label{fig:net}	
\end{figure}

 Let $P_k$ be the chordless path on $k$ vertices and $C_k$ be the chordless cycle on $k$ vertices.  The complete graph (or clique) $K_n$ is the graph on $n$ vertices such that there is an edge between all pairs of vertices.  
A $K$-chain is a graph that is a sequence of complete graphs attached to one another sequentially at a single vertex, i.e.\ a vertex can belong to at most two maximal cliques.
The graph $K_3$ is called the {\em triangle}, and its complement $3K_1$, is called the {\em co-triangle}. 
The bull graph is the graph on 5 vertices and 5 edges arranged as a triangle with two pendant edges. See Figure \ref{fig:bull}. The generalized bull graphs are the family of graphs that can be constructed from the bull graph where each vertex in the triangle of the bull is substituted by a clique (nonempty). See Figure \ref{fig:generalizedbull}. 

\begin{figure}
\begin{center}
\begin{tikzpicture}[scale=1.25]
\tikzstyle{every node}=[font=\small]
\newcommand{\size}{1}
\newcommand{\bull}{1}{
\path (\size * 4, - \size * 4) coordinate (g8);
 \path (g8) +(0, 0) node (g8_1){};
\path (g8) +(0, \size) node (g8_2){};
\path (g8) +(\size, \size) node (g8_3){};
\path (g8) +(\size, 0) node (g8_4){};
\path (g8) +(\size/2, -0.75^0.5) node (g8_5){};
\foreach \Point in {(g8_1), (g8_2), (g8_3), (g8_4), (g8_5)}{ \node at \Point{\textbullet};}
\draw (g8_1) -- (g8_2)
(g8_3) -- (g8_4)
(g8_1) -- (g8_4)
(g8_5) -- (g8_4)
(g8_5) -- (g8_1);
}
\end{tikzpicture}
\end{center}
\caption{The bull}\label{fig:bull}	
\end{figure}
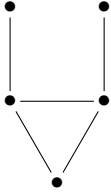

\begin{figure}
\begin{center}
\begin{tikzpicture} [scale = 1.25]
\tikzstyle{every node}=[font=\small]
\newcommand{\size}{1}
\newcommand{\GeneralizedBull}{1}{

    \path (0, 0) coordinate (g1);
    \path (g1) +(-1.5 * \size, 0) node (g1_1){}; 
    \path (g1) +(1.5 * \size, 0) node (g1_3){}; 
    \path (g1) +(-1.5 * \size, -1 * \size) node (g1_4){}; 
    \path (g1) +(1.5 * \size, -1 * \size) node (g1_5){}; 
    \path (g1) +(-1 * \size, 0) node (g1_6){}; 
    \path (g1) +(-1.1 * \size, -1.95 * \size) node (g1_7){}; 
    \path (g1) +(1.1 * \size, -1.95 * \size) node (g1_8){}; 
    \path (g1) +(1 * \size, 0) node (g1_9){}; 
    \path (g1) +(0, -0.5 * \size) node (g1_10){}; 
    \path (g1) +(-1 * \size, -1.94 * \size) node (g1_11){}; 
    \path (g1) +(1 * \size, -1.94 * \size) node (g1_12){}; 
    \path (g1) +(0, -2.5*\size) node (g1_13){};
    \foreach \Point in {(g1_1), (g1_3)}{
        \node at \Point{\textbullet};
    }
    \draw (-1.5 * \size, -1.5 * \size) ellipse (1cm and 0.5cm);
    \draw (1.5 * \size, -1.5 * \size) ellipse (1cm and 0.5cm);
    \draw (0, -3*\size) ellipse (1cm and 0.5cm);
    \draw   (g1_1) -- (g1_4.center)
            (g1_3) -- (g1_5.center)
            (g1_13.center) -- (g1_11.center)
            (g1_13.center) -- (g1_12.center)
            (g1_11.center) -- (g1_12.center);

}

\end{tikzpicture}
\end{center}
\caption{The generalized bull}\label{fig:generalizedbull}
\end{figure}
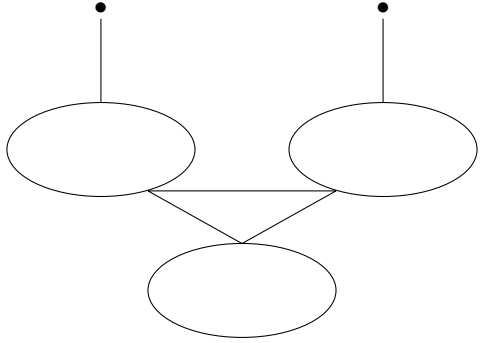

 A {\em stable set} is a set $S$ of vertices of a graph such that there are no edges between any of the vertices in $S$, e.g. a co-triangle is a stable set of size $3$. Let $\alpha(G)$ denote the size of the largest stable set in $G$. An \textit{astroidal triple (AT)} in a graph $G$ is a stable set of three vertices in $G$ such that for any pair of vertices  in the set, there is a path between them that does not intersect the neighbourhood of the third. A graph is called \textit{AT}-free exactly when it does not contain an astroidal triple. 

 A coloring of a graph $G$ is a function $\kappa$ from $V$ to the positive integers $\mathbb{Z}^+$: $\kappa:V\rightarrow \mathbb{Z}^+$. A coloring $\kappa$ is proper if $\kappa(u) \not= \kappa(v)$ whenever vertex $u$ is adjacent to vertex $v$. Chromatic symmetric functions were defined by Stanley \cite{Sold} as a generalization of the chromatic polynomial.  Indeed, if $x_1=x_2=x_3=\ldots =1$, this expression reduces to the chromatic polynomial for a graph.
\begin{definition}
For a graph $G$ with vertex set $V=\{v_1, v_2, \ldots , v_N\}$ and edge set $E$, the chromatic symmetric function is defined to be
\[
X_G=\sum_{\kappa} x_{\kappa(v_1)} x_{\kappa(v_2)}\cdots x_{\kappa(v_{N})}
\]
 where the sum is over all proper colorings $\kappa$ of $G$. 
\end{definition}

A function is symmetric if a permutation of the variables does not change the function. In precise terms, using the wording of Stanley \cite[p286]{StanleyBook}, ``it is a formal power series $\sum_{\alpha}c_{\alpha} x^{\alpha}$ where (a)  $\alpha$ ranges over all weak compositions $\alpha = (\alpha_1, \alpha_2, \ldots )$ of $n$ (of infinite length),  (b) $c_{\alpha} \in R$, (c) $x^{\alpha}$ stands for the monomial $x_{1}^{\alpha_{1}} x_{2}^{\alpha_{2}} \ldots$, and (d) $f(x_{w(1)}, x_{w(2)}, \ldots) = f(x_1, x_2, \ldots)$ for every permutation $w$ of the positive integers.'' Full background details can be found in Macdonald \cite{Macdonald} or Stanley \cite{StanleyBook}. 
It is well-known that certain sets of symmetric functions act as bases for the algebra of symmetric functions. One such set is the set of elementary symmetric functions.
The elementary symmetric function, $e_i(x)$, is defined as 
\[
e_i(x) = \sum_{j_1 < j_2< \cdots < j_i} x_{j_{1}} \cdots x_{j_{i}}.
\]
We can extend this definition using partitions. A partition $\lambda=(\lambda_1, \lambda_2, \ldots \lambda_{\ell})$ of a positive integer $n$ is a nondecreasing sequence of positive integers: $\lambda_1 \geq \lambda_2 \geq \ldots \geq \lambda_{\ell}$, where  $\lambda_i$ is called the $i$th part of $\lambda$, $1\leq i \leq \ell$.   The transpose, $\lambda'$, of $\lambda$, is defined by its parts: $\lambda_i' =  | \{j: \lambda_j \geq i\} |$. 
The elementary symmetric function, $e_\lambda(x)$, is defined as $e_\lambda(x) = e_{\lambda_{1}} e_{\lambda_{2}} \ldots e_{\lambda_{\ell}}$.

 If a given symmetric function can be written as a nonnegative linear combination of elementary symmetric functions we say the symmetric function is {\em e-positive}.  By abuse of notation we say a graph is $e$-positive if its chromatic symmetric function is $e$-positive. Furthermore, we say that a class of graphs is $e$-positive if every graph in the class is $e$-positive.

Note that the property of a graph being $e$-positive is not hereditary. That is, if a graph is $e$-positive, all of its induced subgraphs are not necessarily $e$-positive. 
For example, the chair  (or fork) graph (see Figure \ref{fig:chair}) is $e$--positive with chromatic symmetric function $X_F=  e_{2, 2, 1} + 2e_{3, 1, 1} + e_{3, 2} + 7e_{4, 1} + 5e_{5}$, but contains an induced claw $K_{1, 3}$ which is not $e$-positive, as $X_{K_{1,3}} = e_4+5e_{3,1} -2e_{2,2} + e_{2,1,1}$.  In Section \ref{strongly} we consider graphs whose induced subgraphs are all $e$-positive and dub these graphs {\em strongly $e$-positive}.

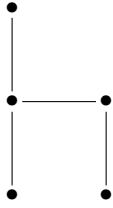
\begin{figure}
\begin{center}
\begin{tikzpicture}[scale=1.25]
\tikzstyle{every node}=[font=\small]
\newcommand{\size}{1}
\newcommand{\Chair}{1}{
    \path (\size * 4, - \size * 2) coordinate (g4);
    \path (g4) +(0, 0) node (g4_1){};
    \path (g4) +(0, \size) node (g4_2){};
    \path (g4) +(\size, \size) node (g4_3){};
    \path (g4) +(\size, 0) node (g4_4){};
    \path (g4) +(0, 2*\size) node (g4_5){};
    \foreach \Point in {(g4_1), (g4_2), (g4_3), (g4_4), (g4_5)}{
        \node at \Point{\textbullet};
    }
    \draw   (g4_1) -- (g4_2)
            (g4_2) -- (g4_3)
            (g4_3) -- (g4_4)
	   (g4_5) -- (g4_2);
}
\end{tikzpicture}
\end{center}
\caption{The chair}\label{fig:chair}	
\end{figure}

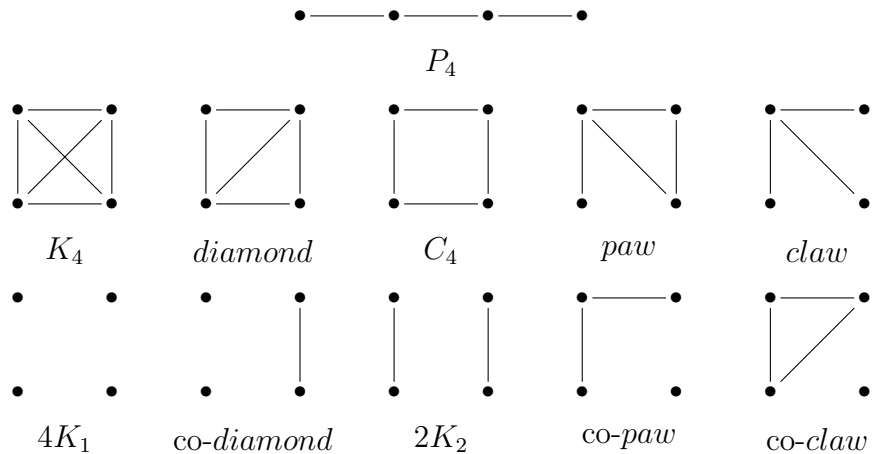
\begin{figure}
\begin{center}
\begin{tikzpicture} [scale = 1.25]
\tikzstyle{every node}=[font=\small]

\newcommand{\size}{1}

\newcommand{\p4}{1}{
    \path (\size * 4, 0) coordinate (g1);
    \path (g1) +(-\size, 0) node (g1_1){};
    \path (g1) +(0, 0) node (g1_2){};
    \path (g1) +(\size, 0) node (g1_3){};
    \path (g1) +(2 * \size, 0) node (g1_4){};
    \foreach \Point in {(g1_1), (g1_2), (g1_3), (g1_4)}{
        \node at \Point{\textbullet};
    }
    \draw   (g1_1) -- (g1_2)
            (g1_2) -- (g1_3)
            (g1_3) -- (g1_4);
    \path (g1) ++(\size  / 2,-\size / 2) node[draw=none,fill=none] { {\large $P_4$}};
}

\newcommand{\kfour}{2}{
    \path (0, - \size * 2) coordinate (g2);
    \path (g2) +(0, 0) node (g2_1){};
    \path (g2) +(0, \size) node (g2_2){};
    \path (g2) +(\size, \size) node (g2_3){};
    \path (g2) +(\size, 0) node (g2_4){};
    \foreach \Point in {(g2_1), (g2_2), (g2_3), (g2_4)}{
        \node at \Point{\textbullet};
    }
    \draw   (g2_1) -- (g2_2)
            (g2_1) -- (g2_3)
            (g2_1) -- (g2_4)
            (g2_2) -- (g2_3)
            (g2_2) -- (g2_4)
            (g2_3) -- (g2_4);
    \path (g2) ++(\size  / 2,-\size / 2) node[draw=none,fill=none] { {\large $K_4$}};
}

\newcommand{\diam}{3}{
    \path (\size * 2, - \size * 2) coordinate (g3);
    \path (g3) +(0, 0) node (g3_1){};
    \path (g3) +(0, \size) node (g3_2){};
    \path (g3) +(\size, \size) node (g3_3){};
    \path (g3) +(\size, 0) node (g3_4){};
    \foreach \Point in {(g3_1), (g3_2), (g3_3), (g3_4)}{
        \node at \Point{\textbullet};
    }
    \draw   (g3_1) -- (g3_2)
            (g3_1) -- (g3_3)
            (g3_1) -- (g3_4)
            (g3_2) -- (g3_3)
            (g3_3) -- (g3_4);
    \path (g3) ++(\size  / 2,-\size / 2) node[draw=none,fill=none] { {\large $diamond$}};
}

\newcommand{\cfour}{4}{
    \path (\size * 4, - \size * 2) coordinate (g4);
    \path (g4) +(0, 0) node (g4_1){};
    \path (g4) +(0, \size) node (g4_2){};
    \path (g4) +(\size, \size) node (g4_3){};
    \path (g4) +(\size, 0) node (g4_4){};
    \foreach \Point in {(g4_1), (g4_2), (g4_3), (g4_4)}{
        \node at \Point{\textbullet};
    }
    \draw   (g4_1) -- (g4_2)
            (g4_1) -- (g4_4)
            (g4_2) -- (g4_3)
            (g4_3) -- (g4_4);
    \path (g4) ++(\size  / 2,-\size / 2) node[draw=none,fill=none] { {\large $C_4$}};
}

\newcommand{\paw}{5}{
    \path (\size * 6, - \size * 2) coordinate (g5);
    \path (g5) +(0, 0) node (g5_1){};
    \path (g5) +(0, \size) node (g5_2){};
    \path (g5) +(\size, \size) node (g5_3){};
    \path (g5) +(\size, 0) node (g5_4){};
    \foreach \Point in {(g5_1), (g5_2), (g5_3), (g5_4)}{
        \node at \Point{\textbullet};
    }
    \draw   (g5_1) -- (g5_2)
            (g5_2) -- (g5_3)
            (g5_2) -- (g5_4)
            (g5_3) -- (g5_4);
    \path (g5) ++(\size  / 2,-\size / 2) node[draw=none,fill=none] { {\large $paw$}};
}

\newcommand{\claw}{6}{
    \path (\size * 8, - \size * 2) coordinate (g6);
    \path (g6) +(0, 0) node (g6_1){};
    \path (g6) +(0, \size) node (g6_2){};
    \path (g6) +(\size, \size) node (g6_3){};
    \path (g6) +(\size, 0) node (g6_4){};
    \foreach \Point in {(g6_1), (g6_2), (g6_3), (g6_4)}{
        \node at \Point{\textbullet};
    }
    \draw   (g6_1) -- (g6_2)
            (g6_2) -- (g6_4)
            (g6_2) -- (g6_3);
    \path (g6) ++(\size  / 2,-\size / 2) node[draw=none,fill=none] { {\large $claw$}};
}

\newcommand{\cokfour}{7}{
    \path (0, - \size * 4) coordinate (g7);
    \path (g7) +(0, 0) node (g7_1){};
    \path (g7) +(0, \size) node (g7_2){};
    \path (g7) +(\size, \size) node (g7_3){};
    \path (g7) +(\size, 0) node (g7_4){};
    \foreach \Point in {(g7_1), (g7_2), (g7_3), (g7_4)}{
        \node at \Point{\textbullet};
    }

    \path (g7) ++(\size  / 2,-\size / 2) node[draw=none,fill=none] { {\large $4K_1$}};
}

\newcommand{\codiamond}{8}{
    \path (\size * 2, - \size * 4) coordinate (g8);
    \path (g8) +(0, 0) node (g8_1){};
    \path (g8) +(0, \size) node (g8_2){};
    \path (g8) +(\size, \size) node (g8_3){};
    \path (g8) +(\size, 0) node (g8_4){};
    \foreach \Point in {(g8_1), (g8_2), (g8_3), (g8_4)}{
        \node at \Point{\textbullet};
    }
    \draw   (g8_3) -- (g8_4);
    \path (g8) ++(\size  / 2,-\size / 2) node[draw=none,fill=none] { {\large co-$diamond$}};
}

\newcommand{\cocfour}{9}{
    \path (\size * 4, - \size * 4) coordinate (g8);
    \path (g8) +(0, 0) node (g8_1){};
    \path (g8) +(0, \size) node (g8_2){};
    \path (g8) +(\size, \size) node (g8_3){};
    \path (g8) +(\size, 0) node (g8_4){};
    \foreach \Point in {(g8_1), (g8_2), (g8_3), (g8_4)}{
        \node at \Point{\textbullet};
    }
    \draw (g8_1) -- (g8_2)
          (g8_3) -- (g8_4);
    \path (g8) ++(\size  / 2,-\size / 2) node[draw=none,fill=none] { {\large $2K_2$}};
}

\newcommand{\copaw}{10}{
    \path (\size * 6, - \size * 4) coordinate (g9);
    \path (g9) +(0, 0) node (g9_1){};
    \path (g9) +(0, \size) node (g9_2){};
    \path (g9) +(\size, \size) node (g9_3){};
    \path (g9) +(\size, 0) node (g9_4){};
    \foreach \Point in {(g9_1), (g9_2), (g9_3), (g9_4)}{
        \node at \Point{\textbullet};
    }
    \draw   (g9_1) -- (g9_2)
            (g9_2) -- (g9_3);
    \path (g9) ++(\size  / 2,-\size / 2) node[draw=none,fill=none] { {\large co-$paw$}};
}

\newcommand{\coclaw}{11}{
    \path (\size * 8, - \size * 4) coordinate (g10);
    \path (g10) +(0, 0) node (g10_1){};
    \path (g10) +(0, \size) node (g10_2){};
    \path (g10) +(\size, \size) node (g10_3){};
    \path (g10) +(\size, 0) node (g10_4){};
    \foreach \Point in {(g10_1), (g10_2), (g10_3), (g10_4)}{
        \node at \Point{\textbullet};
    }
    \draw   (g10_1) -- (g10_2)
            (g10_1) -- (g10_3)
            (g10_2) -- (g10_3);
    \path (g10) ++(\size  / 2,-\size / 2) node[draw=none,fill=none] { {\large co-$claw$}};
}

\end{tikzpicture}
\end{center}
\caption{All four-vertex graphs.}
\end{figure}

\begin{figure}
\begin{center}
\begin{tabular}{||c|c|c||}
\hline \hline
Set $H$ & Positivity & Reference \\ 
\hline \hline
 $P_3$ & $e$-positive & Theorem \ref{cotriangleetalnew} \\ \hline
$3K_1$ & $e$-positive & \cite{StanleyBook} \\ \hline
claw, $K_3$ & $e$-positive & Theorem \ref{cotriangleetalnew}\\ \hline
claw, co-$P_3$ & $e$-positive & Theorem \ref{cotriangleetalnew} \\ \hline
\hline
\end{tabular}
\caption{Table of $e$-positivity for $H$-free graphs where $H$ contains a three-vertex graph}
\end{center}
\end{figure}

\begin{figure}
\label{tablecases}
\begin{center}
\begin{tabular}{||c|c|c||}
\hline \hline
Set  $H$ & Positivity & Reference\\
\hline \hline
claw, $P_4$ & $e$-positive &  \cite{Tsujie} \\  \hline
claw, paw & $e$-positive &  \cite{hamel} \\  \hline
claw, co-paw & $e$-positive &  \cite{hamel} \\  \hline
claw, co-claw (excluding the net) & $e$-positive  & Corollary \ref{e-co-claw} \\  \hline
claw, co-diamond & conjectured $e$-positive  & Conjecture \ref{mainconj} \\ \hline
claw, diamond  & not  necessarily $e$-positive & \cite{hamel} \\  \hline
claw, $K_4$ & not necessarily $e$-positive & \cite{hamel} \\  \hline
claw, $4K_1$ & not necessarily $e$-positive & \cite{hamel} \\  \hline
claw, $C_4$ & not necessarily $e$-positive & \cite{hamel} \\  \hline
claw, $2K_2$ & not necessarily $e$-positive & \cite{hamel} \\  
\hline \hline
\end{tabular}
\end{center}
\caption{Table of $e$-positivity for $H$-free graphs where $H$ contains four-vertex graphs.  Note that the exceptional case, the net, is not $e$-positive.}
\end{figure}

The following lemma  from Stanley \cite{Sold} is useful in constructing new classes of graphs:

\begin{lemma} [\cite{Sold}]
\label{lem:chromatic_disjoint_union}
If a graph $G$ is a disjoint union of subgraphs $G_1 \cup G_2$, then $X_G = X_{G_{1}}X_{G_{2}}$.
\end{lemma}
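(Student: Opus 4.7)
The plan is to proceed directly from the definition of the chromatic symmetric function by establishing a bijection between proper colorings of the disjoint union and pairs of proper colorings of the components, and then checking that monomials multiply correctly under this bijection.

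First I would observe that, since $G_1$ and $G_2$ are vertex-disjoint and there are no edges between them in $G = G_1 \cup G_2$, a function $\kappa : V(G) \to \mathbb{Z}^+$ is a proper coloring of $G$ if and only if its restrictions $\kappa_1 = \kappa|_{V(G_1)}$ and $\kappa_2 = \kappa|_{V(G_2)}$ are proper colorings of $G_1$ and $G_2$ respectively. The map $\kappa \mapsto (\kappa_1,\kappa_2)$ is then a bijection between proper colorings of $G$ and pairs of proper colorings of $G_1$ and $G_2$, with inverse given by gluing.

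Next I would note that the monomial attached to $\kappa$ factors along this bijection: since $V(G)$ is the disjoint union $V(G_1) \sqcup V(G_2)$, we have
\[
\prod_{v \in V(G)} x_{\kappa(v)} = \Big(\prod_{v \in V(G_1)} x_{\kappa_1(v)}\Big)\Big(\prod_{v \in V(G_2)} x_{\kappa_2(v)}\Big).
\]
Summing over all proper colorings $\kappa$ of $G$, I would then use the bijection to rewrite the sum as a double sum over independent pairs $(\kappa_1,\kappa_2)$, so that it separates as a product of two independent sums. The result is
\[
X_G \;=\; \sum_{\kappa_1} \prod_{v \in V(G_1)} x_{\kappa_1(v)} \;\cdot\; \sum_{\kappa_2} \prod_{v \in V(G_2)} x_{\kappa_2(v)} \;=\; X_{G_1} X_{G_2}.
\]

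There is really no serious obstacle here: the only point worth stating carefully is why proper colorings of $G$ are in bijection with pairs of proper colorings of $G_1$ and $G_2$, and this follows immediately from the absence of edges between the two components. The argument is valid at the level of formal power series, so no convergence issues arise.
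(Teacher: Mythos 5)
Your proof is correct and complete: the bijection between proper colorings of $G$ and pairs of proper colorings of $G_1$ and $G_2$, together with the factorization of the monomial over the disjoint vertex set, is exactly the standard argument. The paper does not prove this lemma itself but cites Stanley, and your reasoning matches the proof given in that original source.
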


The following theorem summarizes the $e$-positive status of a number of graph classes:

\begin{theorem}
The following graph classes are known to be $e$-positive (proofs in the individual references given):
\begin{enumerate}
\item $P_k$ \cite{Sold}
\item $C_k$ \cite{Sold}
\item $K_n$ \cite{CV}
\item co-triangle-free \cite{StanleyBook}
\item $K$-chains \cite{gebhard}
\item generalized bull \cite{ChoHuh}
\item (claw, $P_4$-free) \cite{Tsujie}
\item (claw, paw)-free \cite{hamel}
\item (claw, co-paw)-free \cite{hamel}
\end{enumerate}
\label{cotriangleetal}
\end{theorem}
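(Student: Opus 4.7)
The statement is a compilation rather than a single result, so the plan is to verify each of the nine items by invoking its cited source; I would organize the write-up by proof technique rather than marching through the list one item at a time.

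Items (1)--(3) are elementary symmetric-function computations. For $K_n$ the identity $X_{K_n} = n!\,e_n$ drops out of the definition, since a proper colouring of $K_n$ is simply an injection from $V$ into $\mathbb{Z}^+$. For $P_k$ and $C_k$ I would quote Stanley's explicit $e$-basis expansions from \cite{Sold}, which can alternatively be re-derived by a short induction on $k$ using the standard deletion-of-an-edge recurrence for $X_G$. Item (4), the $3K_1$-free (equivalently co-triangle-free) case, is proved in \cite{StanleyBook}: such a graph is the complement of a triangle-free graph and therefore has a vertex partition into at most two cliques, and the $e$-expansion is extracted by converting from the power-sum basis together with a sign-reversing argument on stable partitions.

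Items (5) and (6) are proved by explicit recursive expansion along a structural feature of the class. For $K$-chains the unique cut vertex joining two consecutive cliques allows one to peel off one end of the chain and proceed by induction on the number of cliques, using Lemma \ref{lem:chromatic_disjoint_union} at the inductive step after colouring the cut vertex. For generalized bulls, Cho and Huh's argument in \cite{ChoHuh} decomposes the colourings according to how the colour classes intersect each of the three cliques of the underlying triangle together with the two pendant vertices. Items (7)--(9) are the structural claw-free results, and here the approach is uniform: each class admits a decomposition theorem (modular decomposition in the $P_4$-free case of Tsujie, and explicit structural characterizations of connected (claw, paw)-free and (claw, co-paw)-free graphs in \cite{hamel}) that reduces $e$-positivity of the class to $e$-positivity of a short list of ``building block'' graphs, combined via disjoint union (Lemma \ref{lem:chromatic_disjoint_union}) and via join or clique-attachment operations that can be shown to preserve $e$-positivity under the additional claw-free hypothesis.

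The main obstacle in all of these proofs is the same: showing that the $e$-coefficients are nonnegative after potentially large cancellation in the expansion of $X_G$. What makes each individual item tractable is that the graph-class-specific structure theorem constrains $G$ tightly enough to produce an explicit, manifestly nonnegative $e$-expansion; the persistence of the Stanley--Stembridge conjecture for the full claw-free co-comparability class is precisely the absence of such a tight structural handle in that generality.
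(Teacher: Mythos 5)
The paper gives no proof of this theorem at all: it is a survey statement, and its ``proof'' consists precisely of the citations attached to each item. At that level your plan --- verify each item by invoking its source --- coincides with what the paper does, and the nine cited results are all correct. The problem is that several of the sketches you supply of \emph{how} those sources argue are mathematically wrong, and they would become genuine gaps if anyone tried to make the theorem self-contained from your write-up.

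Concretely: (a) For item (4) you assert that a co-triangle-free graph ``has a vertex partition into at most two cliques.'' This is false: $C_5$ is co-triangle-free (its independence number is $2$) but its cliques have size at most $2$, so two cliques cannot cover its five vertices. A two-clique partition of $G$ is equivalent to $\overline{G}$ being \emph{bipartite}, which is strictly stronger than $\overline{G}$ being triangle-free; Stanley's argument in \cite{StanleyBook} instead exploits that all stable sets, hence all blocks of stable partitions, have size at most $2$. (b) There is no ``standard deletion-of-an-edge recurrence for $X_G$'': the chromatic symmetric function famously fails to satisfy deletion--contraction (this failure is the very motivation for the noncommuting-variable version in \cite{gebhard}), so your proposed rederivation of the $P_k$ and $C_k$ expansions does not go through; Stanley obtains them from closed generating functions for $\sum_k X_{P_k}t^k$ and $\sum_k X_{C_k}t^k$. (c) For $K$-chains, ``colouring the cut vertex and then applying Lemma \ref{lem:chromatic_disjoint_union}'' is not a valid step: that lemma requires a genuine disjoint union, and conditioning on the colour of a cut vertex does not factor $X_G$ in commuting variables. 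The point of \cite{gebhard} is that this identify-a-vertex induction works only for the noncommutative analogue, from which $e$-positivity of $X_G$ is then deduced. None of this endangers the theorem itself, but as written your sketches assert false intermediate statements in (a) and rely on nonexistent recurrences in (b) and (c).
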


\begin{theorem}
The following graph classes are $e$-positive:
\begin{enumerate}
\item $P_3$-free
\item (claw, triangle)-free 
\item (claw, co-$P_3$)-free
\end{enumerate}
\label{cotriangleetalnew}
\end{theorem}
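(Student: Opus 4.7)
The plan is to show that, under the paper's standing connectedness assumption, each of the three classes coincides with a class already shown to be $e$-positive in Theorem \ref{cotriangleetal}, via standard graph-theoretic characterizations. No new symmetric-function computation is needed; each case is a short structural reduction.

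For part (1), I would argue that a connected $P_3$-free graph must be a complete graph: if two vertices $u,v$ were non-adjacent, then the first three vertices along any shortest $u$--$v$ path would induce $P_3$. Hence the graph is $K_n$ for some $n$, which is $e$-positive by item 3 of Theorem \ref{cotriangleetal}. For part (2), claw-freeness at any vertex $v$ means $N(v)$ contains no independent set of size $3$, and triangle-freeness makes $N(v)$ itself independent; together these force $|N(v)| \le 2$ for every $v$. A connected graph with maximum degree at most $2$ is a path $P_n$ or a cycle $C_n$ (with $n \ge 4$ in the cycle case, by triangle-freeness), and both families are $e$-positive by items 1 and 2 of Theorem \ref{cotriangleetal}.

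For part (3), I would first note that $\overline{P_3} = K_2 \cup K_1$, so a graph is co-$P_3$-free exactly when its complement is $P_3$-free, i.e.\ a disjoint union of cliques; equivalently, $G$ itself is a complete multipartite graph. In a complete multipartite graph non-adjacency coincides with lying in the same part, so a claw $K_{1,3}$ would require three pairwise non-adjacent vertices in a common part. Hence claw-freeness forces every part to have size at most $2$ (the degenerate single-part case reduces to $G=K_1$ by connectedness). But then any independent set of $G$ lies inside a single part and so has size at most $2$, meaning $G$ contains no induced $3K_1$ and is co-triangle-free. By item 4 of Theorem \ref{cotriangleetal}, $G$ is $e$-positive.

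There is no real obstacle here; the argument is entirely structural. The subtlest point, such as it is, lives in part (3), where one must explicitly handle the single-part corner case and verify that claw-freeness genuinely bounds every other part by $2$ once there is at least one edge.
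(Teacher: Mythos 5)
Your arguments for parts (1) and (2) are essentially the paper's: $P_3$-free connected graphs are cliques, and (claw, triangle)-free connected graphs have maximum degree at most $2$ and hence are paths or cycles, so both reduce to items already in Theorem \ref{cotriangleetal}. Part (3) is where you genuinely diverge. The paper disposes of (claw, co-$P_3$)-free graphs in one line by observing that the co-paw ($P_3 \cup K_1$) contains an induced co-$P_3$ ($K_2 \cup K_1$), so the class sits inside the (claw, co-paw)-free graphs already proved $e$-positive in \cite{hamel}. You instead unpack the structure: co-$P_3$-free means the complement is a disjoint union of cliques, so $G$ is complete multipartite; claw-freeness then caps every part at size $2$ (once there are at least two parts), so every independent set has size at most $2$ and $G$ is co-triangle-free, landing you on Stanley's classical result (item 4 of Theorem \ref{cotriangleetal}) rather than on \cite{hamel}. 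Your route is correct and arguably preferable in being self-contained: it leans only on the older, textbook fact about co-triangle-free graphs, and it exposes exactly what these graphs look like (complete multipartite with parts of size at most $2$, i.e.\ complements of perfect-matching-plus-isolated-vertex graphs). The paper's route is shorter and fits its broader program of cataloguing $H$-free classes by containment, but it imports the heavier machinery of the (claw, co-paw) result. Both are valid; the only point worth double-checking in yours, which you did handle, is the degenerate single-part case where $G$ is edgeless and connectedness forces $G = K_1$.
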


\begin{proof} 
We have the following arguments:
\begin{description}
\item[Item 1:] If $G$ is $P_3$-free then the components of $G$ are cliques. By \cite{CV}, restated in Theorem \ref{cotriangleetal}and Lemma \ref{lem:chromatic_disjoint_union}, $G$ is $e$-positive.
\item[Item 2:] If $G$ is (claw, triangle)-free, then each component of $G$ is a chordless path or cycle. Then together with \cite{CV}, restated in Theorem \ref{cotriangleetal}, Lemma \ref{lem:chromatic_disjoint_union}, and \cite{Sold}, restated in Theorem \ref{cotriangleetal}, $G$ is $e$-positive.
\item[Item 3:] The class of (claw, co-$P_3$)-free graphs is a subclass of (claw, co-paw)-free graphs which was shown to be $e$-positive in \cite{hamel}.
\end{description}
\end{proof}

Finally, this is Stanley and Stembridge's celebrated conjecture.  They expressed it in terms of incomparability graphs of (3+1)-free posets.

\begin{conjecture}[\cite{stanley}]
\label{stanleyconj}
A claw-free, co-comparability graph  is $e$-positive.
\end{conjecture}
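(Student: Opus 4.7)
The plan is to leverage Guay-Paquet's reduction \cite{GP}, which establishes that to prove Conjecture \ref{stanleyconj} it suffices to show that every unit interval graph is $e$-positive. This is a substantial simplification: a unit interval graph $G$ admits a vertex ordering $v_1, \ldots, v_n$ coming from its interval representation in which, for each vertex $v_i$, the neighbours of $v_i$ lying to the right form a consecutive block and a clique. The goal is to exploit this consecutive-clique structure to build an $e$-positive expansion directly.

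My first attempt would be induction on $n$. The rightmost vertex $v_n$ is simplicial: it lies in a unique maximal clique $K$. Setting $G' = G \setminus v_n$, one tries to express $X_G$ in terms of $X_{G'}$ together with a correction term encoding how $v_n$ attaches to $K \setminus \{v_n\}$, and then argues inductively that the combination admits an $e$-positive expansion. A natural refinement is to iterate this removal, peeling off simplicial vertices to reduce the problem to a $K$-chain (which is $e$-positive by \cite{gebhard}, restated as item 5 of Theorem \ref{cotriangleetal}) plus explicit corrections whose $e$-positivity one would need to establish in closed form.

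A second, complementary approach would be to bootstrap from the main theorem of the present paper, which handles unit interval graphs whose complement is also a unit interval graph. If one could decompose an arbitrary unit interval graph into pieces with this complementary property (for example via modular decomposition, or a careful analysis of the clique-intersection pattern of its canonical interval representation), and then assemble the pieces using Lemma \ref{lem:chromatic_disjoint_union} together with analogous identities for joins and clique sums, one might reduce the general case to the case already settled here.

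The main obstacle, and the reason this conjecture has resisted proof since 1993, is that $X_G$ typically exhibits substantial cancellation when expanded in the $e$-basis, and no known combinatorial rule uniformly produces its $e$-coefficients as nonnegative integers. Inductive arguments break down because the correction term introduced by a simplicial-vertex deletion is not itself manifestly $e$-positive; the successful results listed in Theorem \ref{cotriangleetal}, together with those of Cho and Huh \cite{ChoHuh}, Harada and Precup \cite{HP}, and the present paper, all rely on very particular structural features that do not persist in a general unit interval graph. Making genuine progress would likely require either a new combinatorial object whose enumeration yields the $e$-coefficients, or a geometric or representation-theoretic input of the flavour suggested by the Shareshian-Wachs program \cite{Shareshian}.
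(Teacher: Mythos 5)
There is no proof here, and none should be expected: the statement you were asked about is Conjecture~\ref{stanleyconj}, the Stanley--Stembridge conjecture, which the paper explicitly treats as open --- the entire paper is motivated by the fact that it remains unproved, and the paper's contribution is to settle special cases (co-claw-free unit interval graphs, and unit interval graphs whose complement is also a unit interval graph), not the conjecture itself. Your proposal is an honest survey of strategies rather than an argument, and you say as much in your final paragraph; but since the task is to prove the statement, the gap is total. Concretely, both of your routes fail at identifiable points. For the inductive route: the rightmost vertex in the canonical unit interval ordering is indeed simplicial, but $X_G$ satisfies no deletion--contraction recurrence of the kind your ``correction term'' presupposes (contracting an edge of a simple graph produces a multigraph, and $X$ does not see multiplicities, so the usual chromatic-polynomial recursion does not lift), and no known identity expresses $X_G - X_{G\setminus v}$ in a manifestly $e$-positive form; peeling simplicial vertices down to a $K$-chain therefore leaves you with corrections you cannot control. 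For the bootstrapping route: a generic unit interval graph cannot be decomposed into pieces whose complements are also unit interval graphs (already long paths fail this), and even if it could, Lemma~\ref{lem:chromatic_disjoint_union} only handles disjoint unions --- there is no analogous product or sum formula for joins or clique sums of chromatic symmetric functions that preserves $e$-positivity, so there is no mechanism for ``assembling the pieces.''

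What your write-up gets right is the framing: Guay-Paquet's reduction \cite{GP} does reduce Conjecture~\ref{stanleyconj} to the unit interval case, and that is exactly how the paper positions its results (Theorem~\ref{coclawunit} and its corollary prove $e$-positivity for the subclass of unit interval graphs that are co-claw-free, which includes those with unit interval complements). If you want to contribute something in the spirit of the paper rather than attack the full conjecture, the productive move is to identify a structural restriction on unit interval graphs --- as the paper does with co-claw-freeness, diamond-freeness (Theorem~\ref{diamondunit}), or the $(2K_2,\text{co-diamond})$-free case --- under which the graph collapses to one of the known $e$-positive families ($K$-chains, generalized bulls, cliques, paths, cycles) listed in Theorem~\ref{cotriangleetal}, and prove that collapse by a case analysis on the distance levels $N_i$ as in Lemma~\ref{hempel}.
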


In \cite{hamel} classes of (claw, $H$)-free graphs were analyzed, where $H$ is a graph on 4 vertices. However several classes of graphs, including (claw, diamond)-free graphs and (claw, co-claw)-free graphs are not necessarily $e$-positive, as was demonstrated using a counter-example. The counter-example that is used is a six-vertex graph called the net (Figure \ref{fig:net}). Here we extend the results of \cite{hamel} to remark that there are infinite families of non-$e$-positive graphs:

\begin{lemma}
There are infinitely many (claw, diamond)-free graphs, (claw, $C_4$)-free graphs, and (claw, $K_4$)-free graphs that are not $e$-positive. 
\end{lemma}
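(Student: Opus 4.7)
The plan is to exhibit, for each of the three forbidden-subgraph classes, an infinite family of connected non-$e$-positive graphs. Since the net already lies in all three classes simultaneously, it is natural to build a single unified family by extending the net in a way that preserves all four relevant forbidden-subgraph conditions. For each integer $k \geq 0$, let $N_k$ be the graph obtained from the net by attaching a pendant path of length $k$ at one of the net's three degree-one vertices; then $N_0$ is the net itself and $N_k$ has $6+k$ vertices.

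First I would verify that every $N_k$ belongs to all three classes. Attaching a pendant path introduces no new triangle, no new 4-cycle, and no vertex of degree $\geq 3$ other than the three original triangle vertices; only the attachment point sees its degree rise, and it rises merely to 2. Hence no new induced claw, diamond, $C_4$, or $K_4$ can appear, and $N_k$ inherits the net's membership in all three classes.

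The main task, and the main obstacle, is to show that each $N_k$ is not $e$-positive. My approach is to single out one partition $\lambda$ of $6+k$ and track the coefficient $[e_\lambda]\,X_{N_k}$ by induction on $k$. Starting from Stanley's computation of $X_{N_0} = X_{\mathrm{net}}$, which has a known negative coefficient, I would derive a recursion for $X_{N_k}$ in terms of $X_{N_{k-1}}$ using Stanley's power-sum formula $X_G = \sum_{S \subseteq E}(-1)^{|S|} p_{\lambda(S)}$ applied to the new pendant edges together with Lemma \ref{lem:chromatic_disjoint_union} for the disconnected subgraphs that appear, and from this extract a scalar recursion for $[e_\lambda]\,X_{N_k}$. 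The challenge is to choose $\lambda$ so that this recursion transparently preserves negativity; this is the step I expect to require the most care, since membership in the three classes is a local property preserved by pendant-path attachment, whereas the $e$-expansion is global and behaves subtly under small graph modifications.

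If a clean recursion proves elusive, the fallback is to present three separate families, one per class, each small enough that the $e$-expansion can be written down explicitly and one negative coefficient exhibited by direct calculation (perhaps aided by the computer verification already invoked elsewhere in the paper). In either case, the essential point is that membership in the three classes is immediate from the pendant-path construction, while non-$e$-positivity must be verified via a single strategically chosen coefficient rather than through any attempt to control the whole $e$-expansion at once.
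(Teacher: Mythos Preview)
Your membership argument is fine: attaching a pendant path at a degree-one vertex of the net introduces no new vertex of degree at least $3$ and no new cycle, so each $N_k$ is indeed claw-, diamond-, $C_4$-, and $K_4$-free. The gap is the non-$e$-positivity of $N_k$ for $k\ge 1$, which you do not actually prove. The inductive scheme you sketch faces a real obstacle: appending a pendant edge does not yield a clean recursion on a single $e$-coefficient. Whether you use the subset-of-edges expansion or Lemma~\ref{lem:chromatic_disjoint_union} on the deleted graph, the expression for $X_{N_k}$ involves $p_1\cdot X_{N_{k-1}}$ together with a second term that is not itself the chromatic symmetric function of a smaller $N_j$, and after converting to the $e$-basis the coefficient $[e_\lambda]X_{N_k}$ depends on many coefficients of $X_{N_{k-1}}$ at once. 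There is no evident choice of $\lambda$ for which negativity propagates transparently, and your fallback of direct calculation cannot certify an infinite family either. As written, this is a programme rather than a proof.

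The paper bypasses the difficulty altogether. Instead of constructing a new family and then labouring to establish non-$e$-positivity, it simply invokes the triangle tower graphs of \cite{Dahlberg}, an infinite family already proved there to be non-$e$-positive, and records that they are claw-, $C_4$-, and $K_4$-free. All of the hard analytic work---exhibiting a uniformly negative $e$-coefficient across the whole family---is outsourced to that reference, so the lemma reduces to a one-line citation.
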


\begin{proof}
The family of triangle tower graphs, described in \cite{Dahlberg} are not $e$-positive, but are claw, $C_4$, and $K_4$ free. 
\end{proof}

As a related issue we comment that
there are several (most likely infinite)  (claw, $2K_2$)-free graphs, that are not $e$-positive.
To see this consider
the family of graphs obtained by attaching with an edge three independent vertices of degree 1 to distinct vertices of a clique are not $e$-positive but are claw and $2K_2$ free.

Returning to Stanley's singular counter-example---the net---we focus on this special graph.  In the next section we will show it is the only (claw, co-claw)-free graph that is not $e$-positive. We also note that the net contains an asteroidal triple and this causes us to turn our focus to AT-free graphs. In particular, we note this significant result of Kloks, Kratsch, and M{\"u}ller which shows that the claw-free, co-comparability graphs are one half of the set of (claw, AT)-free graphs.

\begin{theorem} [\cite{kloks}]
A connected graph $G$ is claw-free and AT-free if and only if at least one of the following holds:
\begin{enumerate}
	\item $G$ is a claw-free co-comparability graph.
	\item $G$ is co-triangle-free.
\end{enumerate}
\end{theorem}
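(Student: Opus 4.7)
The plan is to prove the two directions of the biconditional separately.

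The backward direction is essentially bookkeeping. Suppose first that $G$ is co-triangle-free; then $G$ has no induced $3K_1$, so $G$ contains neither a claw (whose three leaves form a $3K_1$) nor an asteroidal triple (which is by definition a stable set of three vertices). Suppose instead that $G$ is a claw-free co-comparability graph. Claw-freeness is given, so it remains to check AT-freeness. Fix any transitive orientation of $\bar G$ and extend it to a linear order $\sigma$ on $V(G)$; transitivity yields the co-comparability ordering property that for $u <_\sigma v <_\sigma w$ with $uw \in E(G)$, either $uv \in E(G)$ or $vw \in E(G)$. Given a putative asteroidal triple $\{a,b,c\}$ with $a <_\sigma b <_\sigma c$, any $a$-$c$ path $p_0 = a, p_1, \ldots, p_m = c$ in $G$ avoiding $N(b)$ must contain a consecutive pair $p_i, p_{i+1}$ with $p_i <_\sigma b <_\sigma p_{i+1}$; applying the ordering property to that triple forces $p_i$ or $p_{i+1}$ to lie in $N(b)$, a contradiction.

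For the forward direction, assume $G$ is connected, claw-free, AT-free, and contains a stable triple; we must produce a transitive orientation of $\bar G$. My plan is to construct a linear order $\sigma$ of $V(G)$ and verify it is a co-comparability ordering, since the contrapositive of that property gives exactly the transitivity of the orientation $u \to v$ defined by $u <_\sigma v$ and $uv \notin E(G)$. The natural candidate $\sigma$ exploits the dominating-pair theorem of Corneil, Olariu, and Stewart: every connected AT-free graph contains a pair $\{x,y\}$ such that every $x$-$y$ path dominates $V(G)$. Take such a pair and let $\sigma = v_1,\ldots,v_n$ be a LexBFS from $x$ with ties broken toward $y$.

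The main obstacle is verifying the co-comparability condition for $\sigma$. Suppose it fails for some triple $v_i <_\sigma v_j <_\sigma v_k$ with $v_i v_k \in E(G)$ but $v_i v_j, v_j v_k \notin E(G)$; then $v_j$ is non-adjacent to both endpoints of the edge $v_i v_k$. Note that $\{v_i,v_j,v_k\}$ is not itself asteroidal, so any contradiction must be manufactured by routing side-paths from $v_j$ toward $v_i$ and $v_k$ through the dominating-pair structure, yielding either a genuine asteroidal triple (combining $v_j$ with suitably chosen vertices near $x$ and $y$) or a forbidden claw at an interior vertex of one of those paths. The delicate case analysis splits on the relative positions of $v_i, v_j, v_k$ with respect to $x$ and $y$ and uses claw-freeness to eliminate branching obstructions. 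Showing that every failure of the ordering condition collapses into one of the two forbidden substructures is where both hypotheses are essential, since neither claw-freeness nor AT-freeness alone implies co-comparability.
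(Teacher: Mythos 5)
The paper does not actually prove this statement: it is quoted as a known result of Kloks, Kratsch and M\"uller, so there is no in-paper proof to compare yours against. Judged on its own terms, your backward direction is correct and standard --- the co-triangle-free case is immediate, and your umbrella-property argument that co-comparability graphs are AT-free is the classical one. The forward direction, however, is the entire content of the theorem, and there your proposal has a genuine gap.

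Concretely, the claim that a LexBFS ordering $\sigma$ started at one end of a dominating pair is umbrella-free is precisely what needs to be proved, and you never prove it: the paragraph beginning ``The main obstacle'' only describes the shape a proof ``must'' take (``delicate case analysis,'' ``collapses into one of the two forbidden substructures'') without carrying out a single case. Two specific warning signs. First, even for graphs that genuinely are co-comparability graphs, a single LexBFS sweep need not return an umbrella-free ordering (obtaining a cocomparability ordering via LexBFS is known to require repeated $\mathrm{LBFS}^{+}$ sweeps), so your candidate $\sigma$ is not obviously the right object even before the verification begins. Second, your sketch never actually uses the hypothesis that $G$ contains a stable triple, yet that hypothesis is indispensable: $C_5$ is connected, claw-free and AT-free but is not a co-comparability graph, and it satisfies the theorem only because it is co-triangle-free. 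A correct argument must visibly break down on $C_5$, and yours offers no point at which it would. If you want a self-contained proof, the natural scaffolding is the Hempel--Kratsch structure lemma that the paper restates in Section 3 (in a BFS layering of a connected claw-free AT-free graph every level except the first is a clique and the first level has independence number at most two); from that one can build a transitive orientation of $\overline{G}$ level by level, with the stable triple forcing the configuration in which the construction succeeds.
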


Together with a result from \cite{StanleyBook} (restated in Theorem \ref{cotriangleetal}), Conjecture \ref{stanleyconj} would imply that the class of claw-free AT-free graphs is $e$-positive.

\section{Unit interval graphs}
\label{unitintervalgraphs}

An interval graph $G$ is a graph whose vertices can be represented by intervals on a straight line where two vertices in $G$ are adjacent if and only if their corresponding intervals intersect. A unit interval graph is an interval graph whose intervals are given by unit lengths. It has been shown in \cite{Lek1962} that interval graphs are exactly the class of chordal $AT$-free graphs. At the same time, unit interval graphs have been shown to be exactly the class of claw-free interval graphs \cite{unitinterval}. Guay-Paquet \cite{GP} has proved that Conjecture \ref{stanleyconj} can be reduced to the statement that the chromatic symmetric function of unit interval graphs are $e$-positive. So proving certain classes of unit interval graphs are $e$-positive will support this conjecture. This gives the motivation to study $H$-free unit interval graphs or equivalently (claw, $H$)-free $AT$-free chordal graphs.  

We investigate two different angles on the $e$-positivity question on unit interval graphs: 1) unit interval graphs that are $H$-free for $H$ a four vertex graph; and, 2) co-claw-free unit interval graphs.

\subsection{$H$-free unit interval graphs}

The table in Figure \ref{table:summary} summarizes what is known about $H$-free  unit interval graphs, where $H$ is a four vertex graph. This lemma from Hempel and Kratsch is required in what follows:

\begin{figure}
\begin{center}
\begin{tabular}{||c|c|c||}
\hline \hline
Graph $H$ & Positivity & Reference \\ \hline \hline
$P_4$ & $e$-positive &  \cite{Tsujie} \\ \hline
paw & $e$-positive &  \cite{hamel}\\ \hline
co-paw & $e$-positive & \cite{hamel} \\ \hline
co-claw & $e$-positive & Theorem \ref{coclawunit} \\ \hline
diamond & $e$-positive & Theorem \ref{diamondunit} \\ \hline
co-diamond & unknown & unknown \\ \hline
$K_4$ &  unknown & unknown \\ \hline
$4K_1$ & unknown & unknown  \\ \hline
$2K_2$ & unknown & unknown \\ \hline
\hline
\end{tabular}
\caption{Table of $e$-positivity results for $H$-free unit interval graphs where $H$ is a four-vertex graph.}
\label{table:summary}
\end{center}
\end{figure}

\begin{lemma}[\cite{hempel}]
\label{hempel}
Let $G$=($V$,$E$) be a claw-free, AT-free graph. Let $N_0={w}$, $N_1=N(w)$, $N_2, \ldots, N_i = \{x \in V | d(x,w) = i\}$. Then the following holds:
\begin{enumerate}
	\item $N_i$ is a clique for all $i=0,2,3,\ldots$
	\item $\alpha([N_1]) \leq 2$.
\end{enumerate}
\end{lemma}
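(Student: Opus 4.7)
The plan is to prove both parts by contradiction, exploiting the forbidden structures (claw and asteroidal triple) supplied by the hypotheses.

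Part 2 is immediate: if $N_1 = N(w)$ contained a stable triple $\{u_1, u_2, u_3\}$, then $w$ together with $u_1, u_2, u_3$ would form a claw centred at $w$, contradicting claw-freeness; hence $\alpha([N_1]) \leq 2$.

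For Part 1, I would proceed by induction on $i$, ruling out non-adjacent pairs in each $N_i$ for $i \geq 2$. Suppose $x, y \in N_i$ are non-adjacent. The set $\{w, x, y\}$ is stable (any two of its vertices are at distance at least two), and the goal is to show that $\{w, x, y\}$ is an asteroidal triple, contradicting AT-freeness. To produce a $w$-$x$ path avoiding $N(y)$, start with a shortest path $w = a_0, a_1, \ldots, a_i = x$ where $a_j \in N_j$. If this path met $N(y)$ at some $a_j$, then $d(w, y) \leq j+1$ combined with $xy \notin E$ would force $j = i-1$; but then $a_{i-1}$ would have three pairwise non-adjacent neighbours $a_{i-2}, x, y$ (the non-adjacencies following from the BFS layering), a claw. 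A symmetric argument provides a $w$-$y$ path avoiding $N(x)$.

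The third required path, from $x$ to $y$ avoiding $N_1$, is the crux. For the inductive step $i \geq 3$, pick $c, d \in N_{i-1}$ adjacent to $x$ and $y$ respectively; by the inductive hypothesis $N_{i-1}$ is a clique, so either $c = d$ or $cd \in E$, and the detour $x$-$c$-$y$ or $x$-$c$-$d$-$y$ lies outside $N_1$. The main obstacle will be the base case $i = 2$, where the inductive bypass layer collapses to $N_1$ itself and cannot be used. I expect this case to demand a direct structural analysis of $[N_1 \cup N_2]$ using Part 2 together with repeated claw-freeness checks, in order to locate a common neighbour of $x$ and $y$ in $N_2$ (or a higher layer) that can serve as the required bridge.
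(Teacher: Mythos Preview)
The paper does not give its own proof of this lemma; it is quoted from Hempel--Kratsch, so there is no in-paper argument to compare your attempt against.

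Your Part~2 is correct, and your handling of the $w$--$x$ and $w$--$y$ paths and of the inductive step $i\ge 3$ is sound. The gap you flag at the base case $i=2$, however, is not a mere loose end: as the lemma is worded here (for an \emph{arbitrary} vertex $w$) that gap cannot be filled. Take the bull, with triangle $c_1c_2c_3$ and pendant edges $s_1c_1$, $s_2c_2$; it is claw-free and AT-free, yet choosing $w=c_3$ gives $N_2=\{s_1,s_2\}$, a non-edge. There is no $s_1$--$s_2$ path avoiding $N(c_3)=\{c_1,c_2\}$, so $\{c_3,s_1,s_2\}$ is not an asteroidal triple, and your plan of exhibiting one must fail in this instance. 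In the original Hempel--Kratsch statement $w$ is not arbitrary but is produced by a specific search procedure (essentially an end of a dominating pair); the present paper has suppressed that hypothesis. With it, the $i=2$ case can be carried through, but the argument then relies on properties of that special starting vertex, not on claw- and AT-freeness alone.
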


\begin{theorem}
\label{diamondunit}
If $G$ is a diamond-free unit interval graph, then $G$ is $e$-positive.
\end{theorem}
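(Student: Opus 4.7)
The plan is to prove that every diamond-free unit interval graph is in fact a $K$-chain, after which $e$-positivity follows immediately from Gebhard's theorem (Theorem \ref{cotriangleetal}, item 5). Since a unit interval graph is a claw-free interval (hence chordal) graph, our hypothesis gives that $G$ is simultaneously claw-free, diamond-free, and chordal, and by the standing assumption connected.

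The first step is a structural observation using only diamond-freeness: two distinct maximal cliques share at most one vertex. Indeed, if $C_1 \neq C_2$ are maximal and share an edge $\{u,v\}$, then maximality provides $a \in C_1 \setminus C_2$ and $b \in C_2 \setminus C_1$, each adjacent to both $u$ and $v$. The edge $ab$ cannot be present (otherwise $\{u,v,a,b\}$ is $K_4$ and $C_1$ fails to be maximal, as it could be extended by $b$), so $\{u,v,a,b\}$ induces a diamond. I would then leverage claw-freeness to upgrade this to: in any (claw, diamond)-free graph, each vertex belongs to at most two maximal cliques. Assuming three maximal cliques $C_1, C_2, C_3$ all contain $v$, pick $w_i \in C_i \setminus \{v\}$; any edge $w_iw_j$ extends $\{v,w_i,w_j\}$ to a maximal clique sharing the edge $\{v,w_i\}$ with $C_i$ and the edge $\{v,w_j\}$ with $C_j$, which by the previous paragraph forces $C_i = C_j$, a contradiction. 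Hence the $w_i$'s are pairwise non-adjacent, and $\{v,w_1,w_2,w_3\}$ induces a claw.

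Finally, I would combine these two structural facts with the interval property. Being an interval graph, $G$ admits a linear ordering of its maximal cliques $C_1, C_2, \ldots, C_m$ such that every vertex lies in a consecutive block of cliques; by the second step this block has length one or two, and by the first step any $C_i, C_{i+1}$ share at most one vertex, while connectedness forces them to share exactly one. This is precisely the definition of a $K$-chain, so $G$ is $e$-positive by Theorem \ref{cotriangleetal}. The main obstacle is the bookkeeping in the two structural lemmas above (in particular, handling the case of a 2-vertex maximal clique and ensuring that the shared vertex between consecutive cliques really exists); once those are settled, the reduction to a $K$-chain is essentially mechanical.
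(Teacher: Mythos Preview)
Your proof is correct in outline and reaches the same conclusion as the paper---that $G$ is a $K$-chain---but by a genuinely different route. The paper proceeds by a BFS-layer analysis: fixing a vertex $w$, it invokes the Hempel--Kratsch lemma (Lemma~\ref{hempel}) to see that each distance level $N_i$ (for $i \neq 1$) is a clique and $\alpha([N_1]) \leq 2$, then argues case-by-case (according to whether $[N_1]$ is one clique or two) that diamond-freeness together with the absence of induced $C_4$ and $C_5$ forces all of $N_{i+1}$ to attach to a single vertex of $N_i$, yielding the $K$-chain shape. Your argument instead works globally with maximal cliques: diamond-freeness bounds pairwise intersections, claw-freeness then bounds the number of maximal cliques through a vertex, and the consecutive-clique ordering of interval graphs assembles these local bounds into a chain. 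Your approach is cleaner and more conceptual, and it makes transparent the remark at the end of the paper's proof that diamond-free unit interval graphs are exactly the $K$-chains; the paper's BFS argument, by contrast, avoids invoking the clique-ordering characterisation of interval graphs and works entirely from the claw-free AT-free machinery already in use elsewhere in the paper.

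One small gap to patch: in your first structural step, having chosen $a \in C_1 \setminus C_2$ and $b \in C_2 \setminus C_1$, the case where $ab$ is an edge does not immediately contradict maximality of $C_1$, since $b$ need not be adjacent to every vertex of $C_1$. The standard repair is to use maximality of $C_2$ instead: since $a \notin C_2$ and $C_2$ is maximal, there is some $b' \in C_2$ non-adjacent to $a$; necessarily $b' \neq u,v$, and $\{u,v,a,b'\}$ induces a diamond. With that adjustment the rest of your argument goes through.
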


\begin{proof}
Let $G=(V,E)$ be a diamond-free unit interval graph. From Lemma \ref{hempel}, fix $w \in V$ and define $N_0={w}$, $N_1=N(w)$, $L_2, \ldots, L_i = \{x \in V | d(x,w) = i\}$. Then $N_i$ is a clique for all $i \neq 1$. Since $G$ is diamond-free, $[N_1]$ must be $P_3$-free. Then since $\alpha([N_1]) \leq 2$, either $[N_1]$ is a complete graph or the disjoint union of two complete graphs.

\begin{case2}
$[N_1]$ is a complete graph.
\end{case2}

\begin{proof}
Every vertex in $N_2$ must have exactly one neighbour in $N_1$. If $y \in N_2$ has two neighbours $x_1, x_2 \in N_1$ then $\{w, x_1, x_2, y\}$ induces a diamond. Then every vertex in $N_2$ must be adjacent to the same vertex in $N_1$ say $x$, or $G$ will contain an induced $C_4$. For $i \geq 0$, this arguement can be continuely applied to $N_{i+2}$ since there is a vertex in $N_i$ that every vertex in $N_{i+1}$ is adjacent to. Therefore $G$ is a $K$-chain and by \cite{gebhard}, restated in Theorem \ref{cotriangleetal}, $G$ is $e$-positive. 
\end{proof}

\begin{case2}
$[N_1]$ is the disjoint union of two complete graphs.
\end{case2}

\begin{proof}
Note that no vertex in $N_2$ can have more than one neighbour in $N_1$. If a vertex in $N_2$ has two neighbours in the same component of $[N_1]$, then $G$ will contain an induced diamond. If a vertex in $N_2$ has a neighbour in each component of $[N_1]$, then $G$ will contain an induced $C_4$. Thus every vertex in $N_2$ has exactly one neighbour in $N_1$. Every vertex in $N_2$ has the same neighbour in $N_1$ or $G$ will contain an induced $C_4$ or $C_5$. Then one component of $[N_1]$  has no neighbours in $N_2$. For $i \geq 0$, this arguement can be continuely applied to $N_{i+2}$ since there is a vertex in $N_i$ that every vertex in $N_{i+1}$ is adjacent to. Therefore $G$ is a $K$-chain and by \cite{gebhard}, restated in Theorem \ref{cotriangleetal}, $G$ is $e$-positive.
\end{proof}

 We also remark that it can be shown that that a graph is a $K$-chain graph if and only if it is diamond-free unit interval graph
 \end{proof}

\begin{theorem}
If $G$ is a ($2K_2$, co-diamond)-free unit interval graph, then $G$ is $e$-positive.
\end{theorem}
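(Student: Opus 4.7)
The plan is to exploit the structural characterization of split graphs. Since $G$ is a unit interval graph, it is chordal, and hence contains no induced $C_4$ or $C_5$; combined with the $2K_2$-free hypothesis, $G$ is therefore $(2K_2,C_4,C_5)$-free, which is precisely the class of split graphs. So we may write $V(G) = K \sqcup I$ with $K$ a clique and $I$ an independent set.

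Next I would use the remaining hypotheses to sharply restrict the bipartite adjacency between $K$ and $I$. Claw-freeness (implied by $G$ being unit interval) forces each vertex of $K$ to have at most two neighbours in $I$, for otherwise three pairwise non-adjacent $I$-vertices would induce a claw with that $K$-vertex. Applying co-diamond-freeness to 4-sets of the form $\{u,v,a,b\}$ with $u \in K$, $v$ an $I$-neighbour of $u$, and $a,b \in I \setminus \{v\}$ shows that any vertex of $K$ with at least one $I$-neighbour must be non-adjacent to at most one vertex of $I$; applying it to $\{u,u',v_1,v_2\}$ with $u,u' \in K$ both non-adjacent to $I$ shows that at most one $K$-vertex can avoid $I$ entirely. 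Together with connectivity, these bounds force $|I| \le 3$.

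A case analysis on $|I| \in \{0,1,2,3\}$ then completes the argument. When $|I|=0$, $G = K_n$. When $|I|=1$, writing $S = N(v) \cap K$ for the unique $v \in I$, direct inspection shows $G$ is a complete graph (if $S=K$), a $K$-chain (if $|S|=1$), or a (claw, co-paw)-free graph (otherwise). When $|I|=2$, subcases arise according to whether any vertex of $K$ is adjacent to both $I$-vertices, and whether some $K$-vertex avoids $I$ entirely; each surviving subcase produces either a generalized bull whose central clique has size one or a (claw, co-paw)-free graph. When $|I|=3$, the AT-free property of unit interval graphs, applied to the independent triple $I = \{v_1,v_2,v_3\}$, forces at least one of the three ``two-$I$-neighbour'' classes $\{u \in K : N(u) \cap I = \{v_i,v_j\}\}$ to be empty, and a short argument (which also uses claw-freeness to rule out vertices of $K$ avoiding $I$) identifies $G$ as a generalized bull with central clique of size one. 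In every case $G$ lies in one of the $e$-positive classes listed in Theorem~\ref{cotriangleetal} -- complete graphs, $K$-chains, generalized bulls, or (claw, co-paw)-free graphs -- so $G$ is $e$-positive.

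The main technical obstacle will be the $|I|=2$ case: one must carefully enumerate the ways $K$ partitions according to its intersection with $N(v_1)$ and $N(v_2)$, check compatibility with both the unit-interval ordering and co-diamond-freeness, and verify that every surviving configuration is either a generalized bull (with $|C_2|=1$) or (claw, co-paw)-free, with no stray structure requiring a separate $e$-positivity argument.
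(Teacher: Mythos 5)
Your proposal is correct, but it takes a genuinely different route from the paper. The paper's proof is four lines long: it cites the classification from \cite{hamel} that the only (claw, co-diamond, $2K_2$)-free graphs not already known to be $e$-positive are the generalized pyramids, observes that a generalized pyramid with all three ovals nonempty contains the asteroidal triple $\{a,b,c\}$ and hence is not a unit interval graph, and concludes that the surviving graphs are generalized bulls, which are $e$-positive by \cite{ChoHuh}. You instead rederive the structure from scratch: chordality plus $2K_2$-freeness gives the split partition $V=K\sqcup I$ via the F\"oldes--Hammer characterization, claw-freeness and co-diamond-freeness bound $|I|\le 3$, and a case analysis lands every graph in one of the known $e$-positive classes. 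Your $|I|=3$ case is exactly the paper's generalized-pyramid-with-an-empty-oval argument (same use of AT-freeness), so the two proofs share their key idea for the hardest configuration; what yours buys is self-containedness (no reliance on the \cite{hamel} classification) and an explicit structure theorem, at the cost of the longer $|I|\in\{1,2\}$ bookkeeping. Two small points for when you fill in the details: the elimination that makes the $|I|=2$ and $|I|=3$ cases close up --- a $K$-vertex adjacent to two $I$-vertices cannot coexist with a $K$-vertex adjacent to none, since together with those two $I$-vertices they induce a claw --- is a claw argument rather than a co-diamond argument, so attribute it correctly; and in the $|I|\le 2$ cases the cleanest uniform conclusion is that any induced co-paw would need two $I$-vertices plus a $K$-vertex with no $I$-neighbour, so co-paw-freeness follows whenever that ``private'' $K$-vertex is absent, with the single remaining configuration being precisely the generalized bull.
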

\begin{proof}
From \cite{hamel}, it was determined that (claw, co-diamond, $2K_2$)-free graphs that are not known to be $e$-positive are the generalized pyramid graphs. Note that $G$ is a {\em generalized pyramid} if every oval is a clique and there are all edges between any two ovals (see Figure~\ref{fig:generalizedpyramid}).  Note that if all 3 ovals of $G$ are non-empty, then $G$ contains an astroidal triple \{a, b, c\}. Since unit interval graphs are $AT$-free, it must be the case that exactly one oval of $G$ is empty. Then $G$ is a generalized bull graph, and from \cite{ChoHuh}, restated in Theorem \ref{cotriangleetal}, $G$ is $e$-positive.
\end{proof}

\begin{figure}
\begin{center}
\begin{tikzpicture} [scale = 1.25]
\tikzstyle{every node}=[font=\small]

\newcommand{\size}{1}

\newcommand{\GeneralizedPyramid}{1}{
    \path (0, 0) coordinate (g1);
    \path (g1) +(-3 * \size, 0) node (g1_1){}; 
    \path (g1) +(0, -3 * \size) node (g1_2){}; 
    \path (g1) +(3 * \size, 0) node (g1_3){}; 
    \path (g1) +(-2 * \size, -1.05 * \size) node (g1_4){}; 
    \path (g1) +(2 * \size, -1.05 * \size) node (g1_5){}; 
    \path (g1) +(-1 * \size, 0) node (g1_6){}; 
    \path (g1) +(-1.1 * \size, -1.95 * \size) node (g1_7){}; 
    \path (g1) +(1.1 * \size, -1.95 * \size) node (g1_8){}; 
    \path (g1) +(1 * \size, 0) node (g1_9){}; 
    \path (g1) +(0, -0.5 * \size) node (g1_10){}; 
    \path (g1) +(-1 * \size, -1.07 * \size) node (g1_11){}; 
    \path (g1) +(1 * \size, -1.07 * \size) node (g1_12){}; 
    \foreach \Point in {(g1_1), (g1_2), (g1_3)}{
        \node at \Point{\textbullet};
    }
    \draw (-1.5 * \size, -1.5 * \size) ellipse (1cm and 0.5cm);
    \draw (1.5 * \size, -1.5 * \size) ellipse (1cm and 0.5cm);
    \draw (0, 0) ellipse (1cm and 0.5cm);
    \draw   (g1_1) -- (g1_4.center)
            (g1_1) -- (g1_6.center)
            (g1_2) -- (g1_7.center)
            (g1_2) -- (g1_8.center)
            (g1_3) -- (g1_5.center)
            (g1_3) -- (g1_9.center)
            (g1_10.center) -- (g1_11.center)
            (g1_10.center) -- (g1_12.center)
            (g1_11.center) -- (g1_12.center);

    \node[text width=1cm] at (-3,0.2) {$a$};
    \node[text width=1cm] at (3.7,0.2) {$b$};
    \node[text width=1cm] at (0,-3) {$c$};
}

\end{tikzpicture}
\end{center}
\caption{A generalized pyramid}
\label{fig:generalizedpyramid}
\end{figure}
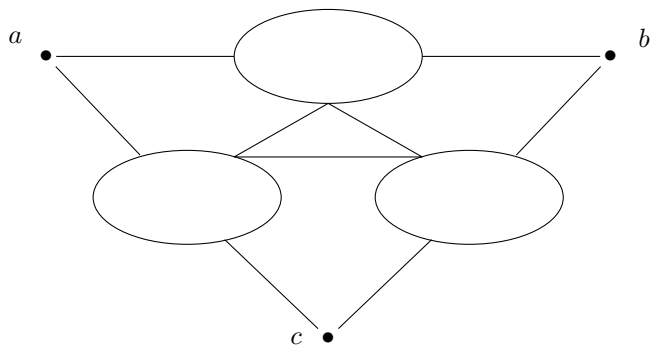

As a side issue we can consider the families of $2K_2$-free unit interval graphs that are not known to be $e$-positive. Let $G=(V,E)$ be a $2K_2$-free unit interval graph. We can assume $G$ is connected since if not $G$ has at most one component that is not an isolated vertex. From Lemma \ref{hempel}, fix $w \in V$ and define $N_0={w}$, $N_1=N(w)$, $L_2, \ldots, L_i = \{x \in V | d(x,w) = i\}$. Since $G$ is $2K_2$-free, $N_i = \emptyset$ for all $i \geq 4$. Then $N_i$ is a clique for $i = 0, 2, 3$ and $\alpha([N_1]) \leq 2$. If $N_3$ has an edge $\{z_1z_2\}$ then for any vertex $x \in N_1$, $\{w, x, z_1, z_2\}$ induces a $2K_2$ in $G$. Then either $N_3$ is empty or $N_3$ has a single vertex.

\begin{case}
Suppose $[N_1]$ is not connected.
\end{case}
\noindent Then $[N_1]$ has two components, say $N_{1,0}$ and $N_{1,1}$. At least one component of $[N_1]$ is a single vertex, say $N_{1,1}$, and the other is a clique. If $N_2$ is empty, then $G$ can be partitioned into 2 cliques, $N_0 \cup N_{1,0}$ and $N_{1,1}$, and by \cite{CV}, restated in Theorem \ref{cotriangleetal}, $G$ is $e$-positive. Now suppose $y_1 \in N_2$. Then vertices in $N_2$ can only have neighbours in one component of $[N_1]$ or $G$ will have a chordless cycle on at least 4 vertices. Then if $N_3 \neq \emptyset$ or $[N_2]$ has an edge, this edge together with the edge formed by $w$ and a vertex from the component of $[N_1]$ with no neighbours in $N_2$, will induce a $2K_2$ in $G$. Then $N_2$ has exactly one vertex and $N_3$ is empty. In this case $G$ is a generalized bull, and by  \cite{ChoHuh}, restated in Theorem \ref{cotriangleetal}, $G$ is $e$-positive.

\begin{case}
Suppose $[N_1]$ is connected.
\end{case}
\noindent Suppose $N_3$ has a single vertex, $z$. Any vertex in $y_1 \in N_2$ that is adjacent to $z$ must be adjacent to every vertex in $N_1$. If $y_1$ is not adjacent to $x \in N_1$, then $\{w, x, y_1, z\}$ induces a $2K_2$ in $G$. Also any vertex $y_2 \in N_2$ that is not adjacent to $z$ must be adjacent to every vertex in $N_1$. If $y_2$ is not adjacent to $x \in N_1$ then $\{y_1, y_2, z, x\}$ induces a claw in $G$. It must also be the case that $N_1$ is a clique. If $x_1,x_2 \in N_1$ are not adjacent then $\{x_1, x_2, y, z\}$ induces a claw in $G$. Then $G$ is a generalized bull and by  \cite{ChoHuh}, restated in Theorem \ref{cotriangleetal}, $G$ is $e$-positive.
\\\\\noindent Now suppose $N_3$ is empty. Then if $N_1$ is a clique, $G$ can be partitioned into two cliques, $N_0 \cup N_1$ and $N_2$, and by \cite{CV}, restated in Theorem \ref{cotriangleetal}, $G$ is $e$-positive. Then $N_1$ must contain an induced $P_3$. 

The family of $2K_2$-free unit interval graphs that are not known to be $e$-positive have $[N_1]$ connected, $[N_1]$ contains an induced $P_3$, $\alpha([N_1]) = 2$, $N_2 \neq \emptyset$, and all $N_i = \emptyset$ for $i \geq 3$.

\subsection{The structure of (claw, co-claw)-free graphs}

As a preliminary to considering co-claw, unit interval graphs, we investigate (claw, co-claw)-free graphs.
 First observe that the net is a (claw, co-claw)-free graph that is not $e$-positive. Here we will show this is the only graph in this class that is not $e$-positive.
 Theorems \ref{thm:claw-co-claw} and \ref{thm:net-or-3-sun} below are implicitly implied by the proof of Theorem 3 in \cite{HR}. For the sake of completeness, we will give a proof of both theorems here.  Note that the complement of the net is called the 3-sun.

\begin{theorem}[\cite{HR}]\label{thm:claw-co-claw}
Let $G$ be a (claw, co-claw)-free graph. If $G$ contains a triangle and a co-triangle that are vertex-disjoint, then $G$ contains a net or a 3-sun as an induced subgraph.
\end{theorem}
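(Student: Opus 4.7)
The plan is to fix a vertex-disjoint triangle $T = \{a_1, a_2, a_3\}$ and co-triangle $S = \{s_1, s_2, s_3\}$ in $G$, then perform a case analysis on the bipartite structure of edges between $T$ and $S$. First I would establish two basic degree constraints. Since $S$ is independent, any vertex $a \in T$ with three neighbors in $S$ induces a claw together with $S$; and since $T$ is a triangle, any vertex $s \in S$ with no neighbor in $T$ induces a co-claw together with $T$. Thus each $a \in T$ has at most two neighbors in $S$, and each $s \in S$ has at least one neighbor in $T$.

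Next, let $d_i := |N(s_i) \cap T| \in \{1, 2, 3\}$, subject to $\sum_i d_i \leq 2|T| = 6$ by the first constraint. Up to permutation this leaves six possible multisets for $\{d_1, d_2, d_3\}$: namely $\{1,1,1\}$, $\{1,1,2\}$, $\{1,2,2\}$, $\{2,2,2\}$, $\{1,1,3\}$, and $\{1,2,3\}$. The analysis splits according to this multiset.

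The two clean cases deliver the conclusion. In the $\{1,1,1\}$ case I would argue that the three singleton attachments must land on three distinct vertices of $T$: if two of the $s_i$ share a common neighbor $a \in T$, then regardless of where the third $s_k$ attaches, $a$ has three pairwise non-adjacent neighbors (either all three $s_i$, or the two sharing $s_i$ together with the unused vertex of $T$), producing a claw centred at $a$. The remaining configuration is precisely a net on $T \cup S$. Symmetrically, in the $\{2,2,2\}$ case each $s_i$ misses exactly one vertex of $T$, and the three $s_i$ must miss three distinct vertices (otherwise some vertex of $T$ is adjacent to all three $s_i$, which is a claw), yielding exactly the 3-sun.

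In the remaining four cases the goal is to show the configuration is already forbidden by (claw, co-claw)-freeness. Whenever some $d_i = 3$, every other $s_j$ has at least two non-neighbors $a',a'' \in T$, and then $\{a', a'', s_i\}$ forms a triangle while $s_j$ is non-adjacent to all three of its vertices, producing a co-claw; this kills both $\{1,1,3\}$ and $\{1,2,3\}$. The mixed cases $\{1,1,2\}$ and $\{1,2,2\}$ require a short subcase check on how the singletons and degree-two vertices are distributed among $T$, which in every branch either concentrates three independent neighbors at some vertex of $T$ (a claw) or pairs the unique singleton with a triangle among $T \setminus N(s) \cup \{s'\}$ for some $s'$ of high degree (a co-claw). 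The main obstacle is not conceptual but organisational: keeping the subcase tree tidy and using the full symmetry among the vertices of $T$ to avoid redundant work.
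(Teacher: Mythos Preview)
Your approach is essentially the same as the paper's: both do a case analysis on the bipartite adjacency between $T$ and $S$ (the paper organizes by the maximum $d_i$ rather than the full multiset, deriving a chain of forced adjacencies, but the content is the same). One slip to fix: in the $d_i=3$ case you assert that \emph{every} other $s_j$ has at least two non-neighbours in $T$, which is false for the degree-$2$ vertex in $\{1,2,3\}$; you only need (and always have) \emph{some} such $s_j$, since the remaining two degrees sum to at most $3$.
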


\begin{proof}
Let $G$ be a (claw, co-claw)-free graph. Suppose $G$ contains a triangle $T$ with vertices $c_1, c_2, c_3$ and a co-triangle $C$ with vertices $s_1, s_2, s_3$ such that the triangle and co-triangle are vertex-disjoint. We claim that 
\begin{equation}
\begin{minipage}{0.9\linewidth}
\label{eq:one-neighbor-in-T} For a triangle $R$, every vertex in $G-R$ is adjacent to at least one vertex in $R$.
\end{minipage}\end{equation} 
If (\ref{eq:one-neighbor-in-T}) failed then $R$ and a vertex of $G-C$ with no neighbors in $R$ would form a co-claw, a contradiction. 

Suppose $s_1$ is adjacent to all vertices of the triangle $T$. Consider the triangle $\{s_1, c_1, c_2\}$. By (\ref{eq:one-neighbor-in-T}), vertex $s_2$ is adjacent to $c_1$, or $c_2$. We may assume $s_2$ is adjacent to $c_2$. We have the following implications.\\
\hspace*{2em}  $c_2$ is not adjacent to $s_3$, for otherwise $\{c_2, s_1, s_2, s_3\}$ induces a claw, a contradiction;\\
\hspace*{2em}  $s_3$ is adjacent to $c_1$, for otherwise $\{s_3, c_1, c_2, s_1\}$ induces a co-claw, a contradiction;\\
\hspace*{2em}  $s_3$ is adjacent to $c_3$, for otherwise $\{s_3, c_2, c_3, s_1\}$ induces a co-claw, a contradiction;\\
\hspace*{2em}  $s_2$ is not adjacent to $c_1$, for otherwise $\{c_1, s_1, s_2, s_3\}$ induces a claw, a contradiction;\\
\hspace*{2em}  $s_2$ is not adjacent to $c_3$, for otherwise $\{c_3, s_1, s_2, s_3\}$ induces a claw, a contradiction;\\
\hspace*{2em}  $\{s_2, c_1, c_3, s_1\}$ induces a co-claw, a contradiction.

Thus, we may assume that every $s_i$ ($i = 1,2,3$) is non-adjacent to at least one $c_j$ ($ j = 1,2,3$). 
Consider the vertex $s_1$ and suppose $s_1$ is adjacent to two vertices of $\{c_1, c_2, c_3\}$.
We may assume $s_1$ is adjacent to $c_1, c_2$ and non-adjacent to $c_3$. 
If $c_1$ is adjacent to both $s_2, s_3$, then $\{c_1, s_1, s_2, s_3\}$ induces a claw, a contradiction. Thus, we may assume $c_1$ is not adjacent to $s_2$.  We have the following implications.\\
\hspace*{2em}  $s_2$ is adjacent to $c_2$, for otherwise, $\{s_2, c_1, c_2, s_1\}$ induces a co-claw;\\
\hspace*{2em} $s_3$ is not adjacent to $c_2$, for otherwise, $\{c_2, s_1, s_2, s_3\}$ induces a claw;\\
\hspace*{2em} $s_3$ is adjacent to $c_1$, for otherwise $\{s_3, c_1, c_2, s_1\}$ induces a co-claw;\\
\hspace*{2em} $s_3$ is adjacent to $c_3$, for otherwise $\{c_1, s_1, c_3, s_3\}$ induces a claw; \\
\hspace*{2em} $s_2$ is adjacent to $c_3$, for otherwise $\{s_2, c_1, c_3, s_3\}$ induces a co-claw.\\

Now, $T \cup C$ induces a 3-sun in $G$. So, we may assume every vertex in $C$ is adjacent to exactly one vertex in $T$. It is now easily to see that $T \cup C$ induces a net in $G$. 
	
\end{proof}

\begin{theorem}[\cite{HR}]\label{thm:net-or-3-sun}
Let $G$ be a (claw, co-claw)-free graph. If $G$ contains a net or a 3-sun as an induced subgraph, then $G$ is a net, or a 3-sun.
\end{theorem}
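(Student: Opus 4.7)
The plan is to establish the statement first for the net case and obtain the 3-sun case by complementation: the class of (claw, co-claw)-free graphs is closed under complementation (the complement of a claw is a co-claw), and the 3-sun is the complement of the net, so if $G$ contains an induced 3-sun then $\overline{G}$ is a (claw, co-claw)-free graph containing an induced net; applying the net case to $\overline{G}$ gives $\overline{G} = $ net, hence $G = $ 3-sun.

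So assume $G$ is (claw, co-claw)-free and contains an induced net $N$ with triangle $\{c_1, c_2, c_3\}$ and pendants $s_1, s_2, s_3$, where $s_i$ is adjacent only to $c_i$ in $N$. I would show $G = N$ by supposing some $v \in V(G) \setminus V(N)$ exists and deriving a contradiction, case-splitting on $A := N(v) \cap \{c_1, c_2, c_3\}$. The workhorse throughout is property (\ref{eq:one-neighbor-in-T}) from the proof of Theorem \ref{thm:claw-co-claw}: for any triangle $R \subseteq G$, every vertex of $G - R$ has a neighbor in $R$. In each of the cases $|A| = 1, 2, 3$, the strategy is to force $v \sim s_i$ for every $i$, which then yields the forbidden claw $\{v; s_1, s_2, s_3\}$.

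If $A = \emptyset$, then $\{v, c_1, c_2, c_3\}$ is a co-claw and we are done. If $|A| = 3$, then for each $i$ the triangle $\{v, c_j, c_k\}$ with $\{j,k\} = \{1,2,3\} \setminus \{i\}$ combined with (\ref{eq:one-neighbor-in-T}) forces $s_i \sim v$, since $s_i$ is non-adjacent to $c_j$ and $c_k$. If $|A| = 2$, say $v \sim c_1, c_2$ and $v \not\sim c_3$, then applying (\ref{eq:one-neighbor-in-T}) to the triangle $\{v, c_1, c_2\}$ forces $s_3 \sim v$, and avoidance of the claws $\{c_1; c_3, s_1, v\}$ and $\{c_2; c_3, s_2, v\}$ forces $s_1 \sim v$ and $s_2 \sim v$. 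If $|A| = 1$, say $v \sim c_1$, then avoidance of the claw $\{c_1; c_2, s_1, v\}$ forces $s_1 \sim v$ (since $c_2 \not\sim s_1$ and $c_2 \not\sim v$); this creates a new triangle $\{v, c_1, s_1\}$, to which (\ref{eq:one-neighbor-in-T}) applies and forces $s_2, s_3 \sim v$ as before.

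The main obstacle is not computational but careful bookkeeping: each forcing step requires verifying that the purported claw or co-claw really has the claimed edge/non-edge pattern among the relevant four vertices. Since the net itself contains only one triangle on its own vertices, the triangles available for applying (\ref{eq:one-neighbor-in-T}) are essentially those built by combining $v$ with pairs of $c_i$'s (and in the $|A|=1$ case the triangle $\{v, c_1, s_1\}$), so the analysis closes in a small number of forced steps.
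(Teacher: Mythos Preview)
Your argument is correct and follows essentially the same strategy as the paper's proof: handle the net case by showing any extra vertex $v$ leads to a forbidden claw or co-claw, and obtain the 3-sun case by complementation. The only difference is organizational---you case-split on $|N(v)\cap\{c_1,c_2,c_3\}|$ and invoke property~(\ref{eq:one-neighbor-in-T}) to force $v$ adjacent to every $s_i$ (yielding the claw $\{v;s_1,s_2,s_3\}$), whereas the paper first fixes some $s_i\not\sim v$ by symmetry and reaches a co-claw directly; both are short and valid.
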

\begin{proof}
Let $G$ be a (claw, co-claw)-free graph. Suppose that $G$ contains a net with vertices $c_1, c_2, c_3, s_1, s_2, s_3$ 
	such that $C = \{c_1, c_2, c_3\}$ induces a triangle, $S = \{s_1, s_2, s_3\}$ induces a co-triangle, and $c_i$ is adjacent to $s_i$ for $i = 1,2,3$. 
	Consider a vertex $t$ not belonging to the net. Vertex $t$ cannot be adjacent to all vertices of $S$, for otherwise $S \cup \{t\}$ induces a claw. So we may assume $t$ is not adjacent to $s_1$. Suppose $t$ is adjacent to $c_1$. Since $\{c_1, s_1, c_2, t\}$ cannot induce a claw, $t$ must be adjacent to $c_2$. Similarly, since $\{c_1, s_1, c_3, t\}$ cannot induce a claw, $t$ must be adjacent to $c_3$. Now $\{s_1, t, c_2, c_3\}$ induces a co-claw, a contradiction. 
	
	So we know $t$ is not adjacent to $c_1$. Vertex $t$ must be adjacent to $c_2$, or $c_3$ (or both), for otherwise, $\{t, c_1, c_2, c_3\}$  induces a co-claw, a contradiction. Without loss of generality, assume $t$ is adjacent to $c_2$. Since $\{c_2, c_1, s_2, t\}$ cannot induce a claw, $t$ must be adjacent to $s_2$. But now $\{s_1, t, c_2, s_2\}$ induces a co-claw. We now can conclude that if $G$ contains a net, the $G$ is a net. By considering the complement of $G$, it follows that if $G$ contains a 3-sun, then $G$ is a 3-sun. 
\end{proof}

\begin{theorem}\label{thm:antenna}
Let $G$ be a (claw, co-claw)-free graph. If $G$ contains an antenna, then $G$ is an induced subgraph of the graph $F_1$.
\end{theorem}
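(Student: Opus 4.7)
The plan is to mimic the argument used for Theorem~\ref{thm:net-or-3-sun}: fix an induced copy $A$ of the antenna inside $G$, and show that each vertex $t\in V(G)\setminus V(A)$ is so constrained by (claw, co-claw)-freeness that the possible ``types'' of $t$ can be listed explicitly, and that all of them embed simultaneously into the fixed graph $F_1$. Concretely, I would first label the vertices of $A$ by their role (the triangle, the co-triangle, and the extra pendant/apex vertex distinguishing the antenna from the net) and observe, exactly as in (\ref{eq:one-neighbor-in-T}), that any vertex $t\notin A$ must have at least one neighbour in every triangle of $A$ (else a co-claw forms with $t$) and at most two neighbours in every stable triple of $A$ (else a claw forms on $t$).

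Next, I would run a case analysis on $N(t)\cap V(A)$. Using Theorem~\ref{thm:net-or-3-sun} applied to the induced net (or $3$-sun) sitting inside the antenna, several attachments of $t$ would immediately force either a claw or a co-claw through $t$, ruling them out. The surviving attachment patterns should reduce to a small number (I expect two or three) of ``twin-like'' roles — essentially, $t$ can only duplicate the role of one of the existing antenna vertices (or a specific combination). After this, I would argue that any two such extra vertices $t,t'$ playing the same role must themselves be adjacent (otherwise they form an independent pair which, together with a suitable edge of $A$, yields a co-claw, or with a suitable non-edge yields a claw). This gives a clique of ``copies'' of each role.

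Finally, I would identify $F_1$ as the canonical ``blow-up'' of the antenna in which precisely those roles that survived the case analysis are expanded into cliques (and all forced adjacencies between different roles are included). The conclusion is then that $G$ embeds as an induced subgraph of $F_1$: every vertex outside the original antenna lies in one of the admissible clique-roles, and all edges inside and between these roles are exactly those dictated by the forbidden-subgraph restrictions.

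The main obstacle I expect is the bookkeeping in the case analysis: the antenna has more vertices and more symmetry than the net, so there are substantially more triangles and co-triangles available to generate forbidden configurations, and one must be careful to check \emph{all} of them when ruling out a candidate neighbourhood $N(t)\cap V(A)$. A secondary technical point is verifying that two extra vertices of the same role are forced to be adjacent (and that two extra vertices of \emph{different} roles have precisely the adjacency dictated by $F_1$); this requires invoking both the claw and the co-claw constraint together with the antenna structure, and is where the explicit description of $F_1$ becomes essential.
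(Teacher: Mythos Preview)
Your overall strategy --- fix the antenna $A$, run a case analysis on $N(t)\cap V(A)$ for each $t\notin A$, and list the surviving attachment types --- is exactly what the paper does. Two of your specific expectations, however, are wrong and would derail the execution.

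First, the antenna contains no induced net or $3$-sun: it has six vertices (the same as either graph) and is isomorphic to neither, since it has a pendant vertex of degree one. So your plan to ``invoke Theorem~\ref{thm:net-or-3-sun} applied to the induced net (or $3$-sun) sitting inside the antenna'' cannot get off the ground. The paper instead carries out the case analysis directly from the triangles and stable triples present in $A$ itself, and finds precisely three admissible attachment types $D_1,D_2,D_3$.

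Second, and more seriously, $F_1$ is not a blow-up of the antenna into cliques: it is a \emph{fixed} graph on exactly nine vertices. After identifying the three roles, the paper shows that each $D_i$ contains \emph{at most one} vertex, so $|V(G)|\le 9$. Your proposed step --- ``any two extra vertices playing the same role must be adjacent, giving a clique of copies'' --- goes in exactly the wrong direction. For instance, for two vertices $v,v'\in D_1$ (adjacent to $s_1,s_2,s_3,c_1$ and non-adjacent to $c_2,c_3$): if $v,v'$ are adjacent then $\{v,v',s_1\}$ is a triangle with $c_2$ non-adjacent to all three, a co-claw; if $v,v'$ are non-adjacent then $\{s_2,v,v',c_2\}$ is a claw centred at $s_2$. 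Either way there is no second copy. The analogous checks for $D_2,D_3$ are equally short. Once $|D_i|\le 1$ is established, the embedding of $G$ into the nine-vertex graph $F_1$ is immediate, and no ``adjacency between different roles'' bookkeeping of the kind you anticipate is needed.
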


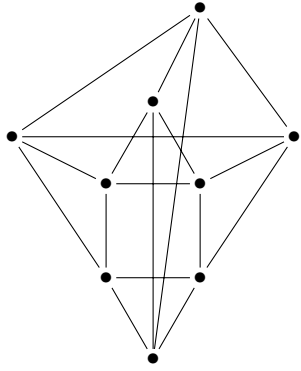
\begin{figure}
\begin{center}
\begin{tikzpicture}[scale=1.25]
\tikzstyle{every node}=[font=\small]
\newcommand{\size}{1}
\newcommand{\F1}{1}{
    \path (\size * 4, - \size * 2) coordinate (g4);
    \path (g4) +(0, 0) node (g4_1){};
    \path (g4) +(0, \size) node (g4_2){};
    \path (g4) +(\size, \size) node (g4_3){};
    \path (g4) +(\size, 0) node (g4_4){};
    \path (g4) +(\size/2, 0.75^0.5+\size) node (g4_5){};
    \path (g4) +(\size/2+0.5, 0.75^0.5+\size+\size) node (g4_6){};
   \path (g4) +(-\size, \size+0.5) node (g4_7){};
   \path (g4) +(2*\size, \size+0.5) node (g4_8){};
   \path (g4) +(\size/2, -0.75^0.5) node (g4_9){};
    \foreach \Point in {(g4_1), (g4_2), (g4_3), (g4_4), (g4_5), (g4_6), (g4_7), (g4_8), (g4_9)}{
        \node at \Point{\textbullet};
    }
    \draw   (g4_1) -- (g4_2)
            (g4_1) -- (g4_4)
            (g4_2) -- (g4_3)
            (g4_3) -- (g4_4)
	   (g4_5) -- (g4_2)
	   (g4_5) -- (g4_3)
            (g4_6) -- (g4_5)

            (g4_9) -- (g4_1)
            (g4_9) -- (g4_4)
            (g4_9) -- (g4_5)
            (g4_9) -- (g4_6)

            (g4_7) -- (g4_6)
            (g4_7) -- (g4_2)
            (g4_7) -- (g4_8)
            (g4_7) -- (g4_1)

           (g4_8) -- (g4_3)
           (g4_8) -- (g4_4)
           (g4_8) -- (g4_6);
}
\end{tikzpicture}
\end{center}
\caption{The graph $F_1$}\label{fig:F1}	
\end{figure}

\begin{figure}
\begin{center}
\begin{tikzpicture}[scale=1.25]
\tikzstyle{every node}=[font=\small]
\newcommand{\size}{1}
\newcommand{\antenna}{1}{
    \path (\size * 4, - \size * 2) coordinate (g4);
    \path (g4) +(0, 0) node (g4_1){};
    \path (g4) +(0, \size) node (g4_2){};
    \path (g4) +(\size, \size) node (g4_3){};
    \path (g4) +(\size, 0) node (g4_4){};
    \path (g4) +(\size/2, 0.75^0.5+\size) node (g4_5){};
    \path (g4) +(\size/2, 0.75^0.5+\size+\size) node (g4_6){};
    \foreach \Point in {(g4_1), (g4_2), (g4_3), (g4_4), (g4_5), (g4_6)}{
        \node at \Point{\textbullet};
    }
    \draw   (g4_1) -- (g4_2)
            (g4_1) -- (g4_4)
            (g4_2) -- (g4_3)
            (g4_3) -- (g4_4)
	   (g4_5) -- (g4_2)
	   (g4_5) -- (g4_3)
            (g4_6) -- (g4_5);
}
	\end{tikzpicture}
	\end{center}
	\caption{The antenna}\label{fig:antenna}	
\end{figure}
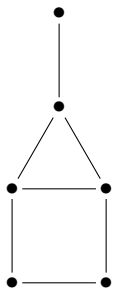

\begin{figure}
\begin{center}
\begin{tikzpicture}[scale=1.25]
\tikzstyle{every node}=[font=\small]
\newcommand{\size}{1}
\newcommand{\bull}{1}{
\path (\size * 4, - \size * 4) coordinate (g8);
 \path (g8) +(-1.5*\size, 0) node (g8_1){};
\path (g8) +(-0.5*\size, 0) node (g8_2){};
\path (g8) +(0.5*\size,0) node (g8_3){};
\path (g8) +(1.5*\size, 0) node (g8_4){};
\path (g8) +(0, -0.75^0.5) node (g8_5){};
\path (g8) +(-0.5*\size, 1.5*\size) node (g8_6){};
\path (g8) +(0.5*\size, -1.5\size) node (g8_7){};
\foreach \Point in {(g8_1), (g8_2), (g8_3), (g8_4), (g8_5), (g8_6), (g8_7)}{ \node at \Point{\textbullet};}
\draw (g8_1) -- (g8_2)
(g8_3) -- (g8_4)
(g8_1) -- (g8_4)
(g8_5) -- (g8_2)
(g8_5) -- (g8_3)

(g8_6) -- (g8_1)
(g8_6) -- (g8_2)
(g8_6) -- (g8_4)

(g8_7) -- (g8_1)
(g8_7) -- (g8_3)
(g8_7) -- (g8_4);
}
\end{tikzpicture}
\end{center}
\caption{The graph $F_2$}\label{fig:F2}	
\end{figure}
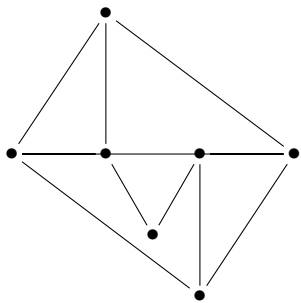

\begin{proof}
Let $G$ be a (claw, co-claw)-free graph. Suppose $G$ contains an antenna $A$ with vertices $c_1,c_2,c_3,s_1,s_2,s_3$ as indicated by Figure~\ref{fig:antenna}. Let $D_1$ be the set of vertices in $G -A$ that are adjacent to $s_2, s_3, c_1, s_1$ and no other vertices in $A$. Let $D_2$ be the set of vertices in $G -A$ that are adjacent to $ s_1, c_2,s_2$ and no other vertices in $A$.  Let $D_3$ be the set of vertices in $G -A$ that are adjacent to $ s_1, c_3,s_3$ and no other vertices in $A$. We claim that 

\begin{equation}
\begin{minipage}{0.9\linewidth}
\label{eq:D} Every vertex in $G-A$ belong to $D_1 \cup D_2 \cup D_3$. 
\end{minipage}\end{equation} 	

Let $v$ be a vertex in $G-A$. Consider the triangle $T$ with vertices $c_1, c_2,c_3$. Then $v$ must be adjacent to at least one vertex in $T$, for otherwise $v$ and $T$ form a co-triangle. Suppose that $v$ is adjacent to both $c_2$ and $c_3$.  We have the following implications:\newline
\hspace*{3em} $v$ is adjacent to $s_1$, for otherwise, $\{s_1, v, c_2, c_3\}$ induces a co-claw; \\
\hspace*{3em} $v$ is not adjacent to $s_3$, for otherwise, $\{v, s_1, c_2, s_3\}$ induces a claw;\\
\hspace*{3em} $v$ is adjacent to $c_1$, for otherwise, $\{c_3, c_1, v, s_3\}$ induces a claw;\\
\hspace*{3em} $\{s_3, v, c_1, s_1\}$ induces a co-claw, a contradiction.

So $v$ must be non-adjacent to $c_2$, or $c_3$. Suppose that $v$ is non-adjacent to both $c_2$ and $c_3$. We will show that $v$ must be in $D_1$. Note that $v$ is adjacent to $c_1$. We have the following implications:\newline
\hspace*{3em} $v$ is adjacent to $s_1$, for otherwise $\{c_1, s_1, v, c_3\}$ induces a claw; \\
\hspace*{3em} $v$ is adjacent to $s_3$, for otherwise $\{s_3, v, c_1, s_1\}$ induces a co-claw; \\
\hspace*{3em} $v$ is adjacent to $s_2$, for otherwise $\{s_2, v, c_1, s_1\}$ induces a co-claw; \\

Thus, $v$ belongs to $D_1$. So we may assume $v$ is adjacent to exactly one vertex of $\{c_2, c_3\}$. Suppose that $v$ is adjacent to $c_3$ but not to $c_2$. We will show that $v$ belongs to $D_3$. We have the following implications. \newline
\hspace*{3em} $v$ is adjacent to $s_3$, for otherwise, $\{ c_3, c_2, v, s_3\}$ induces a claw;\\
\hspace*{3em} $v$ is adjacent to $s_1$, for otherwise, $\{s_1, v, c_3, s_3\}$ induces a co-claw;\\
\hspace*{3em} $v$ is non-adjacent to $s_2$, for otherwise, $\{v, s_1,c_3,s_2\}$ induces a claw;\\
\hspace*{3em} $v$ is non-adjacent to $c_1$, for otherwise, $\{s_2, v, c_1,s_1\}$ induces a co-claw;

Thus, $v$ belongs to $D_3$. By symmetry, if  $v$ is adjacent to $c_2$ but not to $c_3$, then $v$ belongs to $D_2$. We have established (\ref{eq:D}). If some $D_i$ contains at least two vertices, then it is easy to see that $G$ contains a claw, or co-claw. Thus $G$ has at most 9 vertices and is an induced subgraph of $F_1$.
\end{proof}

\begin{theorem}\label{thm:bull}
Let $G$ be a (claw, co-claw)-free graph. If $G$ does not contain a net, a 3-sun, or an antenna, and contains a bull, then $G$ is an induced subgraph of the graph $F_2$.
\end{theorem}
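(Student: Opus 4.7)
The plan is to imitate the proof of Theorem \ref{thm:antenna}: fix an induced bull $B \subseteq G$ with vertices labelled $c_1, c_2, c_3, s_1, s_2$ so that $\{c_1, c_2, c_3\}$ is the triangle and $s_i$ is the pendant attached to $c_i$ for $i \in \{1, 2\}$, and then classify each $v \in V(G) \setminus V(B)$ according to $N(v) \cap V(B)$. Note the triangle $\{c_1,c_2,c_3\}$ and the co-triangle $\{s_1,s_2,c_3\}$ already sit inside $B$ and share the vertex $c_3$; this symmetric structure will drive most of the case analysis.

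Before branching I would establish two global constraints on any such $v$. First, since $G$ is co-claw-free, $v$ must be adjacent to at least one of $c_1, c_2, c_3$; otherwise $\{v\} \cup \{c_1,c_2,c_3\}$ induces a co-claw. Second, since $G$ avoids both a net and a 3-sun, Theorem \ref{thm:claw-co-claw} forces every triangle and every co-triangle of $G$ to share a vertex; applied to $\{c_1,c_2,c_3\}$ this forces $v$ to be adjacent to at least one of $s_1, s_2$, else $\{v, s_1, s_2\}$ would be a co-triangle vertex-disjoint from the triangle. A short direct check also shows that $v$ cannot be adjacent to all three of $c_1, c_2, c_3$: depending on its adjacency to the pendants, doing so either creates a claw with leaves $c_3, s_1, s_2$ or creates a triangle–co-triangle disjoint pair, again contradicting Theorem \ref{thm:claw-co-claw}.

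With those restrictions, the remaining possibilities for $N(v) \cap V(B)$ can be enumerated explicitly. The antenna-free hypothesis is the key new ingredient in several subcases: for example, if $v$ is adjacent to $c_3$ and exactly one of $s_1, s_2$, say $s_1$, then $B \cup \{v\}$ induces an antenna with $C_4 = \{c_1, s_1, v, c_3\}$, roof $c_2$, and pendant $s_2$. Configurations in which $v$ is adjacent to two of $\{c_1, c_2, c_3\}$ either force a triangle vertex-disjoint from the co-triangle $\{s_1, s_2, c_3\}$ (reinvoking Theorem \ref{thm:claw-co-claw}) or produce an induced claw or co-claw directly; the automorphism of the bull exchanging $(c_1, s_1) \leftrightarrow (c_2, s_2)$ halves the subcase count. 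What survives is a short list of admissible types, each matching a specific position inside $F_2$.

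To finish, a similar analysis handles adjacencies between pairs of external vertices using the same forbidden-subgraph toolkit: two vertices of the same admissible type typically produce a claw or co-claw together with a subset of $B$, bounding the total number of external vertices, while each compatible pair of different types is pinned down by the antenna-free hypothesis and by Theorem \ref{thm:claw-co-claw}. Checking that each surviving configuration matches the corresponding induced subgraph of $F_2$ then yields the embedding. The main obstacle I anticipate is the combinatorial bookkeeping: there are many subcases to track, and correctly identifying which induced subgraph—claw, co-claw, antenna, or a disjoint triangle/co-triangle witness—rules out each configuration is where the argument is most likely to slip, so using the bull's automorphism and a systematic case layout will be essential.
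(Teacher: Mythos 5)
Your plan is essentially the paper's own proof: fix the bull, classify each outside vertex $v$ by its neighbourhood on the bull using claw-, co-claw-, net-, 3-sun- and antenna-freeness, reduce to two admissible types (adjacent to $s_1,c_1,s_2$ and nothing else, or to $s_1,c_2,s_2$ and nothing else), and then bound each type to a single vertex to land inside $F_2$. The only real difference is cosmetic: you repeatedly re-invoke Theorem~\ref{thm:claw-co-claw} as a ``no vertex-disjoint triangle and co-triangle'' principle where the paper argues directly with claws, co-claws and an explicit net/antenna, and your representative subcases (e.g.\ the antenna arising when $v$ sees $c_3$ and exactly one pendant, and the exclusion of $v$ adjacent to two of the $c_i$) check out.
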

\begin{proof}	
Let $G$ be a (claw, co-claw)-free graph that does not contain a net, or an antenna, but contains a bull $B$. Let the vertices of the bull $B$ be $c_1, c_2, c_3, s_1, s_2$ as indicated by Figure~\ref{fig:bull}. We may assume $G$ is not the bull, for otherwise we are done. Let $X$ be the set of vertices of $G-B$ that are adjacent to $s_1, c_1, s_2$ and non-adjacent to $c_2, c_3$. Let $Y$ be the set of vertices of $G-B$ that are adjacent to $s_1, c_2, s_2$ and non-adjacent to $c_1, c_3$. We are going to show that every vertex in $G-B$ belongs to $X \cup Y$. 

Consider a vertex $v \in G-B$. Suppose that $v$ is adjacent to $c_3$. Then $v$ is non-adjacent to at least one vertex of $\{s_1, s_2\}$, for otherwise, $\{v, s_1, s_2, c_3\}$ induces a claw, a contradiction. Without loss of generality, assume $v$ is non-adjacent to $s_1$. Then $v$ is non-adjacent to $c_2$, for otherwise, $\{s_1, v, c_2,c_3\}$ induces a co-claw. Now, $v$ is non-adjacent to $c_1$, for otherwise, $\{c_1, s_1,v,c_2\}$ induces a claw. But then the set $\{c_1, c_2, c_3, s_1, s_2, v\}$ induces a net or an antenna in $G$, a contradiction. So, $v$ is non-adjacent to $c_3$.

Consider the triangle with vertices $c_1, c_2, c_3$. Vertex $v$ must be adjacent to $c_1$, or $c_2$, for otherwise, the triangle and $v$ induce a co-claw. Suppose that $v$ is adjacent to $c_1$. Now $v$ must be adjacent to $s_1$, for otherwise, $\{c_1, s_1,v,c_3\}$ induces a claw. Then $v$ must be adjacent to $s_2$, for otherwise, $\{s_2, s_1, c_1, v\}$ induces a co-claw. Vertex $v$ is non-adjacent to $c_2$, for otherwise, $B$ and $v$ induce a 3-sun, and we are done. Now we know $v \in X$. Similarly, if $v$ is adjacent to $c_2$, the $v \in Y$. If $X$ or $Y$ contains two vertices, then it is easy to see $G$ contains a claw, or co-claw. So, $G$ has at most seven vertices and  is an induced subgraph of the graph $F_2$.
\end{proof}

Let ${\cal F}_1$ (respectively, ${\cal F}_2$) be the class of graphs $G$ such that $G$ or $\overline{G}$ contains an antenna (respectively, bull) and is an induced subgraph of $F_1$ (respectively $F_2$). 

\begin{theorem}\label{thm:main}
Let $G$ be a (claw, co-claw)-free graph. Then one of the following holds.
\begin{description}
	\item [(i)] $G$ or $\overline{G}$ contain no triangle.
	\item [(ii)] $G$ or $\overline{G}$ is the net.
	\item [(iii)] $G$ or $\overline{G}$ belong to ${\cal F}_1$
	\item [(iv)] $G$ or $\overline{G}$ belong to ${\cal F}_2$	
	\end{description}
\end{theorem}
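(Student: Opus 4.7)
The plan is to assume condition (i) fails, so both $G$ and $\overline{G}$ contain triangles; equivalently, $G$ contains both a triangle and a co-triangle. We split the argument into two cases according to whether one can choose such a triangle and co-triangle to be vertex-disjoint.

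In the first case, where $G$ contains a vertex-disjoint triangle and co-triangle, Theorem \ref{thm:claw-co-claw} applies directly and yields an induced net or 3-sun in $G$. Theorem \ref{thm:net-or-3-sun} then forces $G$ itself to be the net or the 3-sun. Since the 3-sun is the complement of the net, one of $G, \overline{G}$ is the net and (ii) holds.

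In the second case, every triangle and every co-triangle of $G$ share at least one vertex. They cannot share two vertices (an edge of the triangle would have to be a non-edge of the co-triangle), so they share exactly one. We fix a triangle $T = \{v, a, b\}$ and a co-triangle $S = \{v, c, d\}$ meeting at $v$ and analyze the possible adjacencies of $c, d$ to $\{a, b\}$. Co-claw-freeness applied to $T$ together with each of $c, d$ shows that $c$ and $d$ are each adjacent to at least one of $a, b$. Examining the remaining configurations, claw-freeness eliminates every possibility except the one in which $c$ is adjacent to exactly one vertex of $\{a, b\}$ and $d$ is adjacent to the other: for instance, if $c$ is adjacent to both $a$ and $b$ one locates an induced claw at $a$ or $b$ using $v$, $c$, and $d$, and if $c, d$ are both adjacent to the same endpoint $a$ one finds a claw on $\{a, b, c, d\}$. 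In the surviving configuration, $\{v, a, b, c, d\}$ induces a bull.

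With a bull in hand, we observe that the second case precludes a net or 3-sun, since each of these contains a vertex-disjoint triangle and co-triangle. If $G$ also contains an antenna, then Theorem \ref{thm:antenna} gives that $G$ is an induced subgraph of $F_1$; since $G$ contains an antenna, $G \in \mathcal{F}_1$ and (iii) holds. Otherwise $G$ is net-free, 3-sun-free, and antenna-free while containing a bull, and Theorem \ref{thm:bull} places $G$ as an induced subgraph of $F_2$, giving $G \in \mathcal{F}_2$ and (iv). The main obstacle is the short case analysis in the second case that extracts a bull from a triangle and co-triangle sharing a single vertex; beyond that, the argument is a clean application of the preceding structure theorems.
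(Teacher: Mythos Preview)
Your proof is correct and follows essentially the same route as the paper's: assume (i) fails, use Theorems~\ref{thm:claw-co-claw} and~\ref{thm:net-or-3-sun} to dispose of the vertex-disjoint case as (ii), and in the remaining case extract a bull from a triangle and co-triangle sharing one vertex before invoking Theorems~\ref{thm:antenna} and~\ref{thm:bull}. You are simply more explicit than the paper in two places: the case analysis showing $T\cup S$ must induce a bull (which the paper calls ``easy to verify''), and the observation that the no-vertex-disjoint assumption rules out an induced net or 3-sun, which is needed to meet the hypotheses of Theorem~\ref{thm:bull}.
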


\begin{proof}
 Let $G$ be a (claw, co-claw)-free graph. We may assume $G$ contains a triangle $T$ and a co-triangle $C$, for otherwise,  (i)  holds, and we are done.  The sets $T$ and $C$ cannot be vertex-disjoint, for otherwise, (ii) holds by Theorems~\ref{thm:claw-co-claw} and~\ref{thm:net-or-3-sun}, and we are done. So,  $T$ and $C$ must intersect at one vertex. It is easy to verify that $T \cup C$ induces a bull. We may assume $G$ contains no antenna, for otherwise, by Theorem~\ref{thm:antenna}, (iii) holds and we are done. By Theorem~\ref{thm:bull}, $G$ belongs to ${\cal F}_2$, and so (iv) holds, and we are done.
\end{proof}

\begin{corollary}
\label{e-co-claw}
If $G$ is a (claw, co-claw)-free graph that is not isomorphic to the net, then $G$ is $e$-positive.
\end{corollary}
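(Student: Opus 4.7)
I would apply Theorem~\ref{thm:main}, which structurally classifies (claw, co-claw)-free graphs into four cases, and dispatch $e$-positivity in each.

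In case (i), either $G$ or $\overline{G}$ contains no triangle. If $G$ itself is triangle-free, then being claw-free (and connected, by our blanket assumption) forces every vertex to have degree at most two, so $G$ is a chordless path or a chordless cycle; both are $e$-positive by items 1 and 2 of Theorem~\ref{cotriangleetal}. If instead $\overline{G}$ is triangle-free, then $G$ is $3K_1$-free, i.e.\ co-triangle-free, hence $e$-positive by item 4 of Theorem~\ref{cotriangleetal}.

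Case (ii) consists of exactly two graphs, the net and the 3-sun. The net is excluded by hypothesis, so only the 3-sun remains. This is a single graph on six vertices, and its $e$-positivity is established by directly computing $X_G$ and exhibiting it as a nonnegative combination of the $e_\lambda$'s.

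Cases (iii) and (iv) are likewise finite: $F_1$ has nine vertices and $F_2$ has seven, so the classes ${\cal F}_1$ and ${\cal F}_2$ each contain only finitely many graphs (those induced subgraphs of $F_1$ or $F_2$ containing the appropriate antenna/bull, together with their complements). The plan is then to verify $e$-positivity for every graph in ${\cal F}_1 \cup {\cal F}_2$ by explicit computation of the chromatic symmetric function. This finite case-check is the main obstacle---it is routine but tedious, and is consistent with the authors' remark that they have already verified $e$-positivity of all (claw, net)-free graphs on up to nine vertices by computer, which subsumes every graph that arises here.
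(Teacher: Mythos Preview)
Your proposal is correct and follows essentially the same approach as the paper: invoke the structural classification of Theorem~\ref{thm:main}, handle case~(i) via the known $e$-positivity of (claw, triangle)-free and co-triangle-free graphs, handle case~(ii) by noting the 3-sun is $e$-positive, and dispose of cases~(iii) and~(iv) by a finite computer check of all graphs in ${\cal F}_1 \cup {\cal F}_2$. The paper's proof is terser but structurally identical.
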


\begin{proof}
This follows directly from the Theorem \ref{thm:main} and the facts that all graphs in ${\cal F}_1$ and ${\cal F}_2$ were verified by computer program to be $e$-positive, (claw, triangle)-free graphs and co-triangle-free graphs are $e$-positive, and the complement of the net is $e$-positive.
\end{proof}

The following theorem is one of our main results:

\begin{theorem}
\label{coclawunit}
If $G$ is a co-claw-free unit interval graph, then $G$ is $e$-positive.
\end{theorem}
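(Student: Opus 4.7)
The plan is to reduce this theorem to Corollary~\ref{e-co-claw}, which already handles the class of (claw, co-claw)-free graphs except for the single exceptional graph, the net. To apply that corollary, I need two things: first, that our hypothesis implies $G$ is (claw, co-claw)-free, and second, that $G$ cannot be the net.

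The first point is immediate. A unit interval graph is by definition a claw-free interval graph, so any unit interval graph is already claw-free; combined with the co-claw-free hypothesis, $G$ belongs to the class treated by Corollary~\ref{e-co-claw}.

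For the second point, the plan is to show the net is not even an interval graph, let alone a unit interval graph. Interval graphs are exactly the chordal AT-free graphs (as noted at the start of Section~\ref{unitintervalgraphs}, citing Lekkerkerker--Boland), so it suffices to exhibit an asteroidal triple in the net. Labelling the vertices as in Figure~\ref{fig:net}, with triangle vertices $c_1,c_2,c_3$ and pendants $s_1,s_2,s_3$ attached at the respective $c_i$, the three pendants $\{s_1,s_2,s_3\}$ form a stable set, and for any pair $s_i,s_j$ the path $s_i$-$c_i$-$c_j$-$s_j$ connects them while avoiding the unique neighbour $c_k$ of the third pendant $s_k$. Thus $\{s_1,s_2,s_3\}$ is an asteroidal triple, the net is not AT-free, and in particular the net is not a unit interval graph.

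Putting these pieces together: a co-claw-free unit interval graph $G$ is (claw, co-claw)-free and is not the net, hence $e$-positive by Corollary~\ref{e-co-claw}. There is no substantive obstacle here — all the structural work has been done in the preceding subsection, and the only thing to verify is the AT certificate ruling out the net, which is essentially a one-line check.
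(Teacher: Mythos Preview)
Your proposal is correct and follows essentially the same approach as the paper: the paper's proof also invokes Corollary~\ref{e-co-claw}, notes that unit interval graphs are claw-free, and rules out the net by observing it contains an asteroidal triple. Your version is slightly more detailed in exhibiting the specific AT, but the argument is the same.
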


\begin{proof}
The only (claw, co-claw)-free graph that is not $e$-positive is the net and the net is not a unit interval graph (since it contains an astroidal triple). Then it follows from Corollary \ref{e-co-claw} that co-claw-free unit interval graphs are $e$-positive.
\end{proof}

\begin{corollary}
If $G$ is a unit interval graph and the complement of $G$ is a unit interval graph, then $G$ is $e$-positive.
\end{corollary}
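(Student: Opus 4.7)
The plan is to observe that this corollary is essentially a direct consequence of Theorem~\ref{coclawunit} together with the characterization of unit interval graphs as claw-free interval graphs (mentioned at the start of Section~\ref{unitintervalgraphs}, via \cite{unitinterval}).

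First I would unwind what it means for $\overline{G}$ to be a unit interval graph. By the characterization, $\overline{G}$ being a unit interval graph implies $\overline{G}$ is claw-free. Now $\overline{G}$ containing no induced $K_{1,3}$ is the same as saying $G$ contains no induced co-claw (the complement of $K_{1,3}$), i.e., $G$ is co-claw-free.

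Thus $G$ is simultaneously a unit interval graph and co-claw-free. Theorem~\ref{coclawunit} exactly asserts that every co-claw-free unit interval graph is $e$-positive, so invoking it finishes the proof.

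There is no real obstacle here; the corollary is a one-line consequence of the previously established Theorem~\ref{coclawunit}. The only thing to be careful about is explicitly noting the complementation step: $\overline{G}$ being claw-free translates into $G$ being co-claw-free. Once that is on the page, the conclusion is immediate.
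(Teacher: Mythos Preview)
Your proposal is correct and matches the paper's own proof essentially line for line: the paper also deduces that $\overline{G}$ being a unit interval graph forces $G$ to be co-claw-free, and then invokes Theorem~\ref{coclawunit}. Your version simply makes the complementation step (claw-free $\overline{G}$ $\Leftrightarrow$ co-claw-free $G$) a bit more explicit.
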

\begin{proof}
This follows directly from the previous theorem and the fact that if the complement of $G$ is a unit interval graph, then $G$ is co-claw-free.
\end{proof}

\section{Strongly $e$-positive graphs}
\label{strongly}

As discussed in Section \ref{introduction}, the property of a graph being $e$-positive is not hereditary. This gives motivation to seek out which graphs have this special property.   We coin the term {\em strongly e-positive} graphs for graphs with this property.

\begin{definition}
A graph $G$ is strongly $e$-positive if for all induced subgraphs $H$ of $G$, $H$ is $e$-positive.
\end{definition}

Note that the classes of claw-free co-comparability graphs, unit interval graphs, and (claw, co-diamond)-free graphs are all subclasses of (claw, net)-free graphs.  See Figure \ref{fig:universe}. We conjecture that this class of graphs is exactly the class of strongly $e$-positive graphs.

\begin{conjecture}
A graph is strongly $e$-positive if and only if it is (claw, net)-free.
\end{conjecture}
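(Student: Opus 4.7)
For the forward direction, I would note that both forbidden graphs are themselves not $e$-positive: the claw has expansion $X_{K_{1,3}} = e_4 + 5e_{3,1} - 2e_{2,2} + e_{2,1,1}$ with a negative coefficient (as recorded earlier in the paper), and the net is Stanley's original counterexample from \cite{Sold}. Since strong $e$-positivity is by definition hereditary, any strongly $e$-positive graph must avoid both as induced subgraphs.

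For the reverse direction, since (claw, net)-freeness is itself a hereditary property, it suffices to prove that every (claw, net)-free graph $G$ is $e$-positive. My strategy is strong induction on $|V(G)|$, combined with a case split according to whether $G$ admits an asteroidal triple. If $G$ is AT-free, then together with claw-freeness this places $G$ in the scope of the Kloks--Kratsch--Müller dichotomy (the theorem stated just before Section~\ref{unitintervalgraphs}): $G$ is either co-triangle-free (hence $e$-positive by Theorem~\ref{cotriangleetal}(4)) or a claw-free co-comparability graph; the latter is precisely Stanley--Stembridge (Conjecture~\ref{stanleyconj}). If on the other hand $G$ contains an AT $\{a,b,c\}$, net-freeness forces these three vertices, together with any triangle witnessing their connectivity, to fail the net pattern, so the three detour paths required by the AT definition must be glued in a very restricted fashion. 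I would analyze this structure using a Hempel--Kratsch-style distance layering $N_0, N_1, N_2, \ldots$ around a carefully chosen vertex (cf.\ Lemma~\ref{hempel}), then try to extract either a clique cutset or a homogeneous set whose removal decomposes $X_G$ multiplicatively (via Lemma~\ref{lem:chromatic_disjoint_union}), allowing induction. The base-case building blocks in this decomposition should be $K$-chains and generalized bulls, handled by Theorem~\ref{cotriangleetal}(5,6).

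The main obstacle is structural: the AT-free subcase already subsumes Stanley and Stembridge's long-standing conjecture, so the (claw, net)-free conjecture is provably at least as hard as Conjecture~\ref{stanleyconj} and cannot be fully settled without resolving it. A realistic incremental program is therefore (i) to handle the AT-containing case unconditionally via the structural analysis sketched above, and (ii) to show that any minimum-vertex counterexample to the conjecture is necessarily AT-free, thereby reducing the full statement to Stanley--Stembridge. Even step~(i) is delicate: the Gebhard--Sagan-style $e$-positivity expansions used for $K$-chains behave well under clique-attachment but are not obviously stable under the modular substitutions that (claw, net)-freeness permits around an AT, so adapting that machinery to the wider "triangle scaffolds" dictated by forbidden net-completions is where I expect the real work to lie.
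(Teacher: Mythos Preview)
The statement is a conjecture, and the paper offers no proof of the reverse implication. The paper's entire treatment is: the forward direction is immediate (your argument matches theirs verbatim --- the claw and the net are both non-$e$-positive, so a strongly $e$-positive graph must exclude each as an induced subgraph), while for the reverse direction the paper only reports computer verification up to $9$ vertices and explicitly calls the direction ``quite challenging.''

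Your observation that the reverse direction is at least as hard as Stanley--Stembridge is correct and is the essential structural point: since the net contains an asteroidal triple, AT-free implies net-free, so every claw-free co-comparability graph is (claw, net)-free and the conjecture strictly contains Conjecture~\ref{stanleyconj}. The paper notes this containment pictorially (Figure~\ref{fig:universe}) but does not spell out the reduction as crisply as you do.

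Two technical cautions on your speculative program for the AT-containing case. First, Lemma~\ref{lem:chromatic_disjoint_union} concerns disjoint unions only; neither a clique cutset nor a homogeneous set yields a multiplicative factorisation of $X_G$, so the inductive glue you propose is not available from that lemma. Second, Lemma~\ref{hempel} has AT-freeness as a hypothesis --- its conclusion that the distance layers $N_i$ are cliques for $i\ge 2$ can fail once an asteroidal triple is present --- so a ``Hempel--Kratsch-style'' layering in the AT-containing case would need genuinely new structural input, not just an adaptation. Your candour about these obstacles is appropriate; the paper itself makes no claim to have a route through them.
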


Clearly if a graph is strongly $e$-positive, then it is (claw, net)-free since both the claw and net are not $e$-positive.  However, proving the other direction seems to be quite challenging. All (claw, net)-free graphs up to and including 9 vertices were verified by computer to be $e$-positive. This provides strong evidence in support of the conjecture.  

The contrapositive of part of this conjecture is:
\begin{conjecture}
\label{mainconj}
If $G$ is not $e$-positive, then $G$ contains an induced claw or an induced net.
\end{conjecture}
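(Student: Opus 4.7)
The plan is to prove the contrapositive: every (claw, net)-free graph is $e$-positive. My approach is structural decomposition aligned with Figure~\ref{fig:universe}. The class of (claw, net)-free graphs properly contains the claw-free AT-free graphs (the net itself carries an asteroidal triple formed by its three pendants, so net-containment implies AT-containment), and inside the claw-free AT-free world sit the co-triangle-free graphs and the claw-free co-comparability graphs. I would leverage the known and conjectured $e$-positivity of these inner classes and then handle whatever lies strictly between them and the full (claw, net)-free class.

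First I would dispose of the claw-free AT-free part: by the Kloks--Kratsch--M\"uller theorem any connected such graph is either co-triangle-free (hence $e$-positive by \cite{StanleyBook}, restated as item 4 of Theorem~\ref{cotriangleetal}) or a claw-free co-comparability graph, which is the Stanley--Stembridge setting of Conjecture~\ref{stanleyconj} and, by Guay-Paquet \cite{GP}, is equivalent to proving every unit interval graph is $e$-positive. Second, for a (claw, net)-free graph $G$ that contains an asteroidal triple $\{a,b,c\}$, I would study the three AT-witnessing paths and, using claw-freeness at each internal vertex together with net-freeness applied to every triangle-with-three-external-neighbors configuration, show that the resulting structure forces a decomposition into $e$-positive pieces: a clique module whose removal yields a smaller (claw, net)-free graph, a substitution of cliques into a $K$-chain or generalized bull of the form in Theorem~\ref{cotriangleetal}, or a disjoint union on which Lemma~\ref{lem:chromatic_disjoint_union} applies. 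An induction on the number of vertices, with these reductions, would close out the case.

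The main obstacle is that Conjecture~\ref{mainconj} subsumes the Stanley--Stembridge conjecture, which has resisted attack for more than twenty-five years: any unconditional proof must resolve it. Consequently the realistic first milestone is not an unconditional theorem but a Guay-Paquet-style reduction showing that the entire (claw, net)-free class collapses to the unit interval case. Even this weaker target is delicate because the \emph{extra} graphs --- those in (claw, net)-free but not in claw-free AT-free --- all contain asteroidal triples, so the natural unit-interval modelling is unavailable and would have to be replaced by a combinatorial $e$-expansion argument in the style of \cite{ChoHuh} or the correlation/ladder techniques of \cite{gebhard}. Identifying the right reduction operation (likely a clique substitution, or a module-replacement, that provably preserves $e$-positivity of the chromatic symmetric function) is, in my view, where the real work lies and where any progress beyond the (claw, co-claw)-free warm-up of Corollary~\ref{e-co-claw} will have to come from.
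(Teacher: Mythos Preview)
The statement is a \emph{conjecture} in the paper, not a theorem: the authors do not prove it and offer only computer verification for graphs on at most $9$ vertices as supporting evidence. There is therefore no proof in the paper to compare your proposal against.

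Your proposal is not a proof either, and to your credit you say so. You correctly observe that the claw-free AT-free subcase already subsumes the full Stanley--Stembridge conjecture via the Kloks--Kratsch--M\"uller dichotomy and Guay-Paquet's reduction, so any unconditional argument along your lines would settle Conjecture~\ref{stanleyconj}. That alone means the first branch of your plan cannot be completed with current tools. The second branch --- handling a (claw, net)-free graph that contains an asteroidal triple by forcing a decomposition into $K$-chains, generalized bulls, or clique-substitutions --- is purely aspirational: no such structure theorem is known, and there is no mechanism by which claw-freeness and net-freeness applied to the AT-witnessing paths would pin the graph down to anything as rigid as the list you give. Even the intermediate target you isolate (a Guay-Paquet-style reduction of the whole (claw, net)-free class to unit interval graphs) would be a substantial new result, since the graphs in the gap are not co-comparability and none of the existing modular-relation machinery applies to them.

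In summary: the paper leaves this open, and your write-up is a sensible research outline rather than a proof; the genuine gap is that both branches of the strategy rest on unresolved problems (Stanley--Stembridge for the AT-free part, and an as-yet-nonexistent structure theorem for the AT-containing part).
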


Note a kinship between strongly $e$-positive and the {\em nice} property of Stanley \cite{stanleygarsiatribute} where a graph $G$ is nice if whenever there is a stable partition of $G$ of type $\lambda$ (i.e.\  a partition into stable sets of size $\lambda_1, \lambda_2, \ldots$ ) and whenever $\mu \leq \lambda$ in dominance order, there exists a stable partition of type $\mu$.  Then Proposition 1.6 of \cite{stanleygarsiatribute} states that a graph $G$ and all its induced subgraphs are nice if and only if $G$ is claw-free.


\begin{center}
{\bf Acknowledgement}
\end{center}
This work was supported by the Canadian Tri-Council Research
Support Fund. The authors A.M.H. and C.T.H. were each supported by
individual NSERC Discovery Grants.  This research was enabled in part by support provided by Compute Ontario (computeontario.ca) and Compute Canada (www.computecanada.ca). This work was done by author O.D.M. in partial fulfillment of the course requirements for CP493 and CP494: Directed Research Project I  and II in the Department of Physics and Computer Science at Wilfrid Laurier University.

\end{document}